\begin{document}



\setlength{\parindent}{5mm}
\renewcommand{\leq}{\leqslant}
\renewcommand{\geq}{\geqslant}
\newcommand{\N}{\mathbb{N}}
\newcommand{\sph}{\mathbb{S}}
\newcommand{\Z}{\mathbb{Z}}
\newcommand{\R}{\mathbb{R}}
\newcommand{\C}{\mathbb{C}}
\newcommand{\F}{\mathbb{F}}
\newcommand{\g}{\mathfrak{g}}
\newcommand{\h}{\mathfrak{h}}
\newcommand{\K}{\mathbb{K}}
\newcommand{\RN}{\mathbb{R}^{2n}}
\newcommand{\ci}{c^{\infty}}
\newcommand{\derive}[2]{\frac{\partial{#1}}{\partial{#2}}}
\renewcommand{\S}{\mathbb{S}}
\renewcommand{\H}{\mathbb{H}}
\newcommand{\eps}{\varepsilon}
\newcommand{\chz}{c_{\mathrm{HZ}}}

\theoremstyle{plain}
\newtheorem{theo}{Theorem}
\newtheorem{prop}[theo]{Proposition}
\newtheorem{lemma}[theo]{Lemma}
\newtheorem{definition}[theo]{Definition}
\newtheorem*{notation*}{Notation}
\newtheorem*{notations*}{Notations}
\newtheorem{corol}[theo]{Corollary}
\newtheorem{conj}[theo]{Conjecture}
\newtheorem{question}[theo]{Question}
\newtheorem*{question*}{Question}

\newenvironment{demo}[1][]{\addvspace{8mm} \emph{Proof #1.
    ---~~}}{~~~$\Box$\bigskip}

\newlength{\espaceavantspecialthm}
\newlength{\espaceapresspecialthm}
\setlength{\espaceavantspecialthm}{\topsep} \setlength{\espaceapresspecialthm}{\topsep}

\newenvironment{example}[1][]{\refstepcounter{theo} 
\vskip \espaceavantspecialthm \noindent \textsc{Example~\thetheo
#1.} }%
{\vskip \espaceapresspecialthm}

\newenvironment{remark}[1][]{\refstepcounter{theo} 
\vskip \espaceavantspecialthm \noindent \textsc{Remark~\thetheo
#1.} }%
{\vskip \espaceapresspecialthm}

\def\Homeo{\mathrm{Homeo}}
\def\Hameo{\mathrm{Hameo}}
\def\Diffeo{\mathrm{Diffeo}}
\def\Symp{\mathrm{Symp}}
\def\Id{\mathrm{Id}}
\newcommand{\norm}[1]{||#1||}
\def\Ham{\mathrm{Ham}}
\def\Hamtilde{\widetilde{\mathrm{Ham}}}
\def\Crit{\mathrm{Crit}}
\def\Spec{\mathrm{Spec}}
\def\osc{\mathrm{osc}}
\def\Cal{\mathrm{Cal}}

\title[Energy-Capacity inequalities and continuous Hamiltonians]{New Energy-Capacity-type inequalities and uniqueness of continuous Hamiltonians} 
\author{Vincent Humilière, Rémi Leclercq, Sobhan Seyfaddini}
\date{\today}

\address{VH: Institut de Math\'ematiques de Jussieu, Universit\'e Pierre et Marie Curie, 4 place Jussieu, 75005 Paris, France}
\email{vincent.humiliere@imj-prg.fr}

\address{RL: Universit\'e Paris-Sud, D\'epartement de Math\'ematiques, Bat. 425, 91405 Orsay Cedex, France}
\email{remi.leclercq@math.u-psud.fr}

\address{SS: University of California Berkeley, Berkeley, CA 94720, USA}
\email{sobhan@math.berkeley.edu}

\subjclass[2010]{Primary 53D40; Secondary 37J05} 
\keywords{symplectic manifolds, Hamiltonian diffeomorphism group, $C^0$-symplectic topology, Hofer's distance, spectral invariants}

\maketitle

\begin{abstract} 
We prove a new variant of the energy-capacity inequality for closed rational symplectic manifolds (as well as certain open manifolds such as $\R^{2n}$, cotangent bundle of closed manifolds...) and we derive some consequences to $C^0$--symplectic topology. Namely, we prove that a continuous function which is a uniform limit of smooth normalized Hamiltonians whose flows converge to the identity for the spectral (or Hofer's) distance must vanish. This gives a new proof of uniqueness of continuous generating Hamiltonian for hameomorphisms. This also allows us to improve a result by Cardin and Viterbo on the $C^0$--rigidity of the Poisson bracket. 
\end{abstract}

\section{Introduction and results}

Let $(M, \omega)$ denote a closed and connected symplectic manifold.  It is said to be rational if $\omega(\pi_2(M)) = \Omega \mathbb{Z}$ for a non-negative $\Omega \in \mathbb{R}$.  A rational symplectic manifold is called monotone if there exists $\lambda \in \mathbb{R}$ such that $[\omega] = \lambda c_1$ on $\pi_2(M)$, where $c_1$ denotes the first Chern class of $(M,\omega)$.  We say that $M$ is positively monotone if $\lambda \geq 0$ and negatively monotone if $\lambda < 0$.

Recall that, because $\omega$ is non-degenerate, a smooth Hamiltonian, that is, a smooth map $H:\S^1\times M\to \R$, generates a family of Hamiltonian vector fields defined by $dH_t=\omega(X_H^t,\cdot\,)$ and which in turn generates a 1--parameter family of diffeomorphisms $\phi_H^t$ such that $\phi_H^0$ is the identity and $\partial_t \phi_H^t = X_H^t(\phi_H^t)$. 

The time--1 diffeomorphisms obtained as the end of a Hamiltonian flow form a group called the Hamiltonian diffeomorphism group and usually denoted $\Ham(M, \omega)$. Its universal cover, $\Hamtilde(M, \omega)$, is naturally isomorphic to the set of equivalence classes of normalized Hamiltonians. Recall that (on compact manifolds), a Hamiltonian is said to be normalized if for all $t$, $\int_M H_t\, \omega^n=0$ and that two normalized Hamiltonians $H$ and $K$ are equivalent if there exists a homotopy running from $H$ to $K$, consisting of normalized Hamiltonians whose flows have fixed ends, namely $\Id$ and $\phi:=\phi_H^1=\phi_K^1$.

The universal cover $\Hamtilde(M, \omega)$ admits two natural ``(pseudo-)norms''. The first one was introduced by Hofer in \cite{hofer} (and is now called Hofer's norm). It is defined by
$$\|\tilde{\phi}\|= \inf_K\int_0^1\left(\max_{x\in M} K(t,x)-\min_{x\in M} K(t,x)\right)dt$$
where the infimum is taken over all Hamiltonians $K$ whose flow is a representative of the homotopy class $\tilde{\phi}$.

The second one arises as a consequence of the theory of spectral invariants. One can associate to every smooth Hamiltonian a real number called the spectral invariant of $H$; it is usually denoted by $c(1,H)$.  This is, roughly speaking, the action level at which the neutral element $1\in QH^*(M)$ appears in the Floer homology of $H$. These invariants were introduced by Viterbo, Schwarz and Oh (See \cite{viterbo1}, \cite{schwarz}, the lecture notes \cite{Oh} and references therein). They have been extensively studied and have had many interesting applications to symplectic topology. For example, they were used by Entov and Polterovich in their construction of Calabi quasimorphisms \cite{entov-polterovich1}, and by Ginzburg in his proof of the Conley conjecture \cite{Ginzburg}.

Note that, even though the unit of the quantum cohomology ring is not necessarily the only class to which one can associate such invariants, it is the only one used in this article and thus $c(1,H)$ will be denoted $c(H)$.

Spectral invariants lead to a ``spectral pseudo-norm'' which is defined for an element $\tilde{\phi}\in \widetilde{\Ham}(M,\omega)$ generated by a Hamiltonian $H$ as
$$\gamma(\tilde{\phi})=c(H)+c(\bar{H})$$
($\bar{H}$ is explicitly defined in Section \ref{section spectral invariants}, it generates the Hamiltonian isotopy $(\phi_H^t)^{-1}$). One quite remarkable fact is that the spectral pseudo-norm is bounded from above by Hofer's norm (see Section \ref{section spectral invariants}).

In this article, we are interested in limits of Hamiltonian flows for these (pseudo-)norms; this is a central theme of what is now called ``$C^0$--symplectic topology''. This terminology refers to a family of problems in symplectic topology that tries to define and study continuous analogs of the classical smooth objects of the symplectic world. Such definitions are often made possible by symplectic rigidity results. As an example, the famous Gromov--Eliashberg Theorem (the group of symplectic diffeomorphisms is $C^0$--closed in the full group of diffeomorphisms) allows to define a symplectic homeomorphism as a homeomorphism which is a $C^0$--limit of symplectic diffeomorphisms. 

One important motivation for $C^0$--symplectic topology is to try to define continuous Hamiltonian dynamics. 
As an example, this was the purpose of the definition by Oh and Müller \cite{muller-oh} of the notion of a ``continuous Hamiltonian isotopy'' (we will contract this terminology to the shorter ``hameotopy''), whose definition we now recall.  Equip $M$ with a distance $d$ induced by any Riemannian metric.  We define the $C^0$--distance between two homeomorphisms $\phi, \psi$ by  $\displaystyle d_{C^0}(\phi, \psi) := \max_{x} d(\phi(x), \psi(x)).$ For two paths of homeomorphisms $\phi^t, \psi^t$ ($t\in [0,1]$) define  $\displaystyle d_{C^0}(\phi^t, \psi^t):= \max_{t,x} d(\phi^t(x), \psi^t(x)).$ In Remark \ref{rem: C0_dist}, we briefly discuss an important property of this metric.

 A path of homeomorphisms $h^t$ is a \emph{hameotopy} if there exists a sequence of smooth Hamiltonian functions $\{H_k\}$ such that
 \begin{itemize}
 \item $d_{C^0}(\phi_{H_k}^t, h^t) \to 0,$ 
 \item the Hamiltonian functions $H_k$ converge uniformly to a continuous function $H:\S^1\times M\to\R$.
\end{itemize}

Analogously to the smooth case, the function $H$ is said to ``generate'' the isotopy $h^t$. A continuous function $H$ generates at most one hameotopy \cite{muller-oh}.
The set of all time-independent functions $H$ generating a hameotopy will be denoted by $C^0_{\Ham}$. As noticed in \cite{muller-oh}, every $C^{1,1}$ function belongs to $C^0_{\Ham}$. One important result of the theory is the uniqueness of the generating continuous Hamiltonian:
\begin{theo}[Viterbo \cite{viterbo2}, Buhovsky--Seyfaddini \cite{buhovsky-seyfaddini}]\label{theo unicite hameo} Let $\{H_k\}$, $\{H_k'\}$ be two sequences of normalized smooth Hamiltonians on a closed manifold $M$. Suppose that their flows $C^0$--converge to the same continuous isotopy and that $H_k-H_k'$ converges uniformly to some continuous function $H$. Then $H$ vanishes identically. 
In other words, given a hameotopy, the generating continuous Hamiltonian is unique.
\end{theo}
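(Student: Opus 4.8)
The plan is to deduce the theorem from the new energy-capacity inequality. Since a continuous function generates at most one hameotopy (M\"uller--Oh), the two assertions of the statement are equivalent, so I treat the first. First I would compress the two sequences into one: set $G_k:=\bar H_k\#H_k'$, the normalized Hamiltonian generating the isotopy $t\mapsto(\phi_{H_k}^t)^{-1}\circ\phi_{H_k'}^t$; a one-line computation gives $G_k(t,x)=-(H_k-H_k')(t,\phi_{H_k}^t(x))$. Writing $h^t$ for the common $C^0$-limit of $\phi_{H_k}^t$ and $\phi_{H_k'}^t$ and using the uniform continuity of $H$ on the compact set $\S^1\times M$, one checks that $G_k$ converges uniformly to $(t,x)\mapsto -H(t,h^t(x))$, while $\phi_{G_k}^t=(\phi_{H_k}^t)^{-1}\circ\phi_{H_k'}^t$ converges $C^0$ to the constant isotopy $\Id$, uniformly in $t$. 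Since each $h^t$ is a homeomorphism, it is enough to prove the following reduced statement: \emph{if $\{G_k\}$ is a sequence of normalized smooth Hamiltonians with $G_k\to G$ uniformly and $\phi_{G_k}^t\to\Id$ in the $C^0$ sense uniformly in $t\in[0,1]$, then $G\equiv 0$.}

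For the reduced statement I would argue by contradiction. If $G\not\equiv 0$, then replacing $\{G_k\}$ by $\{\bar G_k\}$ if necessary (which, by the same computation, again has flows $C^0$-converging to $\Id$ and uniform limit $-G$), one may assume $G(t_0,x_0)>0$ for some $(t_0,x_0)$. Pick $\mu>0$, an interval $I=(a,b)\ni t_0$ and a closed ball $\bar V\ni x_0$ contained in a Darboux chart with $G_t>2\mu$ on $I\times\bar V$, so that $(G_k)_t>\mu$ there for all large $k$; meanwhile the uniform $C^0$-convergence of the flows to $\Id$ says that for $k$ large every $\phi_{G_k}^t$-orbit over $[0,1]$ has diameter $<\eps_k$ with $\eps_k\to 0$. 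The heart of the proof is then to confront these two facts through the new energy-capacity inequality. After passing to the Hamiltonian generating $\phi_{G_k}^{a+t(b-a)}\circ(\phi_{G_k}^{a})^{-1}$ — which still generates a flow with $\eps_k$-small orbits and which is $>\mu(b-a)$ on all of $[0,1]\times\bar V$ — the inequality, applied with the displaceable set $V$, should yield in this configuration a relation of the shape $\mu(b-a)\lesssim e(V)+o_k(1)$, where $e(V)$ is the displacement energy of $V$ and the error term $o_k(1)$ is controlled by the $C^0$-size $\eps_k$ of the flow. Letting $k\to\infty$ gives $\mu(b-a)\lesssim e(V)$; then shrinking $V$ (with $x_0$ still inside and $G_t>2\mu$ on the smaller ball, so $e(V)\to 0$) gives $\mu(b-a)\le 0$, a contradiction. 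Hence $G\equiv 0$.

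The reduction is routine; the whole weight falls on the use of the energy-capacity inequality, which is where I expect the main obstacle to lie. The reason a naive approach fails is that $C^0$-closeness of $\phi_{G_k}^1$ to the identity gives \emph{no} a priori control on the spectral invariant $c(G_k)$ — the spectral (pseudo-)norm is not $C^0$-continuous — so the "$\gamma(\phi_{G_k})\to 0$" form of the vanishing statement is simply unavailable, and one genuinely needs the refined, \emph{localized} energy-capacity inequality (the main theorem of the paper) to extract a contradiction from the purely local input "$G_k$ bounded below on a small displaceable ball, while generating a $C^0$-small flow"; making precise how the orbit size $\eps_k$ feeds into the error term is the delicate point. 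One further, minor issue: the energy-capacity inequality is stated for rational (and certain open) manifolds, whereas Theorem~\ref{theo unicite hameo} allows an arbitrary closed $M$; since the hypothetical failure $G\not\equiv 0$ is already witnessed inside a single Darboux chart, this is bridged by carrying out the argument in a rational model containing that chart, e.g. $\mathbb T^{2n}$.
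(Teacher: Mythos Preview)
Your reduction to a single sequence $\{G_k\}$ with $G_k\to G$ uniformly and $\phi_{G_k}^t\to\Id$ in $C^0$ is correct and is exactly the paper's first step. The contradiction setup (finding $I$, $V$, $\mu$ with $G_k>\mu$ on $I\times V$) is also fine.

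The genuine gap is at the step you yourself flag as delicate. You appeal to ``the refined, localized energy-capacity inequality'' to produce a relation $\mu(b-a)\lesssim e(V)+o_k(1)$ with the error term controlled by the orbit diameter $\eps_k$. No such inequality is proven in the paper. Theorems~\ref{theo base} and~\ref{theo base rational} say: for $H$ constant equal to $C$ on $U$, \emph{either} $\gamma(\tilde\phi_H^1)\geq\chz(U)$ \emph{or} $c(H)$ is pinned near $C$. The $C^0$-size of the flow does not enter anywhere; to reach the second branch you must first rule out the first, i.e.\ know $\gamma(\tilde\phi_{G_k}^1)<\chz(V)$ --- precisely the $C^0$-continuity of $\gamma$ you have just (correctly) declared unavailable. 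Your transplantation to $\mathbb{T}^{2n}$ also fails as stated: $G_k$ is a global Hamiltonian on $M$, not supported in the Darboux chart, so it cannot be pushed through a symplectic embedding of that chart.

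The paper's way out is a conjugation trick together with an external $C^0$-continuity result. For any Hamiltonian diffeomorphism $\psi$ compactly supported in a small open $U$, the commutator isotopy $(\phi_{G_k}^{t})^{-1}\psi^{-1}\phi_{G_k}^{t}\psi$ is compactly supported in $U$ for $k$ large (since $\phi_{G_k}^t$ is $C^0$-close to $\Id$) and $C^0$-converges to $\Id$. This can be pushed forward via an embedding $\iota:U\hookrightarrow W$ into a closed rational $W$, and then one invokes the theorem of \cite{seyfaddini11} that for such compactly supported isotopies the spectral norm \emph{is} $C^0$-continuous; hence $\gamma(\tilde\psi^{-1}\tilde\phi_{G_k}^{t}\tilde\psi,\tilde\phi_{G_k}^{t})\to 0$. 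Now Theorem~\ref{theo unicite complete} applies to the pair $(G_k\circ\psi,\,G_k)$ in $W$ and forces $G(t,\psi(x))-G(t,x)$ to depend only on $t$ on $U$; varying $\psi$ and using normalization gives $G\equiv 0$. The missing idea in your sketch is exactly this: manufacture out of the global flow a \emph{compactly supported} isotopy, so that the known $C^0$-continuity of $\gamma$ in that restricted setting becomes available.
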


The first major theorem of this article is a result analogous to the above with the $C^0$--distance replaced by the spectral pseudo-distance $\gamma$.  Since, in this generality, $\gamma$ is not defined on the Hamiltonian diffeomorphism group itself but only on its universal cover, we need to replace isotopies by their lift to the universal cover. We will denote by $\{\tilde{\phi}_H^t\}$ (or just $\tilde{\phi}_H^t$) the unique lift of the isotopy $\{\phi_H^t\}$ to $\Hamtilde(M,\omega)$ whose starting point, $\tilde{\phi}_H^0$, is the identity element. Said differently, for fixed $t\in\R$, $\tilde{\phi}_H^t$ is the element of $\Hamtilde(M,\omega)$ represented by the path $[0,1]\to \Ham(M,\omega)$, $s\mapsto \phi_H^{st}$.
\begin{theo}\label{theo unicite complete} Let $(M, \omega)$ denote a rational symplectic manifold, let $U$ be a non-empty open subset of $M$, $I$ be a non-empty open interval in $\R$ and $\{H_k\}$, $\{H_k'\}$ be two sequences of smooth Hamiltonians such that
\begin{itemize}
 \item[(i)]  For any $t\in I$, $\gamma(\tilde{\phi}_{H_k}^t,\tilde{\phi}_{H_k'}^t)$ converges to zero,
 \item[(ii)] $H_k$ and $H_k'$ converge uniformly on $I\times U$ respectively to continuous functions $H$ and $H'$.
\end{itemize}
Then, $H-H'$ depends only on the time variable on $I\times U$. 
\end{theo}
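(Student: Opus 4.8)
The plan is to argue by contradiction, reducing everything to the new energy--capacity inequality applied on tiny balls inside $U$. Note that ``$H-H'$ depends only on $t$ on $I\times U$'' means precisely that for every $t\in I$ the function $(H-H')(t,\cdot)$ is constant on $U$. So suppose this fails: there are $t_1\in I$ and $x_0,y_0\in U$ with, say, $(H-H')(t_1,x_0)-(H-H')(t_1,y_0)=2\delta_0>0$. (No normalization of $H_k,H_k'$ is needed, since only $\gamma$ enters the argument, and anyway normalizing could destroy hypothesis (ii), which is only local.)

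First I would localise. Since $H-H'$ is continuous on $I\times U$, I can choose disjoint Darboux balls $B_+\ni x_0$ and $B_-\ni y_0$ inside $U$ and an open subinterval $J\ni t_1$ with $\overline J\subset I$ such that
$$\inf_{\overline{B_+}}(H-H')(t,\cdot)\;-\;\sup_{\overline{B_-}}(H-H')(t,\cdot)\;>\;\delta_0\qquad\text{for all }t\in J.$$
The crucial point is that the two balls may be taken as small as we wish without destroying this inequality or shrinking $J$ below a fixed length; in particular their displacement energies $e(B_+),e(B_-)$ can be made arbitrarily small. Since $H_k\to H$ and $H_k'\to H'$ uniformly on $I\times U\supset J\times(B_+\cup B_-)$, for all $k$ large and all $t\in J$ we get
$$\inf_{\overline{B_+}}(H_k-H_k')(t,\cdot)\;-\;\sup_{\overline{B_-}}(H_k-H_k')(t,\cdot)\;>\;\tfrac12\delta_0.$$

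The heart of the argument is to feed this into the new energy--capacity inequality, applied to the pair of isotopies $\{\phi_{H_k}^t\}$, $\{\phi_{H_k'}^t\}$ together with the displaceable sets $B_+,B_-$ (possibly after first using bi-invariance of $\gamma$ and hypothesis (i) to pass to the relative isotopies $t\mapsto\phi_{H_k}^t\circ(\phi_{H_k}^a)^{-1}$, $t\mapsto\phi_{H_k'}^t\circ(\phi_{H_k'}^a)^{-1}$ for some fixed $a\in J$, whose generating Hamiltonians involve $H_k,H_k'$ only at times in $J$, where we have uniform control). That inequality should yield, for a suitable $t_2\in J$, a lower bound of the shape
$$\gamma\big(\tilde\phi_{H_k}^{t_2},\tilde\phi_{H_k'}^{t_2}\big)\;\geq\;\kappa\int_J\Big(\inf_{\overline{B_+}}(H_k-H_k')(t,\cdot)-\sup_{\overline{B_-}}(H_k-H_k')(t,\cdot)\Big)dt\;-\;C\big(e(B_+)+e(B_-)\big)$$
for suitable constants $\kappa,C>0$. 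Combining with the two displayed estimates and the fact that $e(B_\pm)$ were chosen small, the right-hand side is bounded below by a fixed positive constant independent of $k$, contradicting hypothesis (i). Hence $H-H'$ depends only on $t$ on $I\times U$.

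I expect the genuine difficulty to be concentrated in this middle step, that is, in having an energy--capacity inequality of the above flexible form. The naive route---passing to the commutator isotopy $\theta_k^t:=\phi_{H_k}^t\circ(\phi_{H_k'}^t)^{-1}$, whose generating Hamiltonian is $H_k(t,\cdot)-H_k'\big(t,(\theta_k^t)^{-1}(\cdot)\big)$, and arguing that a Hamiltonian which is comparatively large across a displaceable set forces its flow to be $\gamma$-large---runs into the obstacle that, unlike in the $C^0$ situation of Theorem~\ref{theo unicite hameo}, hypothesis (i) only gives $\gamma(\tilde\theta_k^t)\to 0$ and no $C^0$-control on $\theta_k^t$, so the generating Hamiltonian of $\theta_k^t$ need not be close to $H_k-H_k'$ near $B_\pm$. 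The new energy--capacity inequality is meant to bypass exactly this: being conjugation-invariant and only ``seeing'' each displaceable set through its displacement energy, it localises the comparison of $\phi_{H_k}^t$ and $\phi_{H_k'}^t$ to $U$ at the controlled cost $C(e(B_+)+e(B_-))$, which is beaten, for small balls, by the genuine spatial oscillation of $H-H'$ across $B_+$ and $B_-$ supplied by hypothesis (ii).
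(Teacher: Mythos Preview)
Your overall strategy---contradiction, localise to two small Darboux balls $B_\pm$ on which $H-H'$ has opposite signs, then invoke an energy--capacity inequality to force a uniform lower bound on $\gamma(\tilde\phi_{H_k}^{t},\tilde\phi_{H_k'}^{t})$---is exactly the paper's, and your diagnosis of why the ``naive route'' through $\theta_k^t=\phi_{H_k}^t(\phi_{H_k'}^t)^{-1}$ fails is spot on. But you explicitly leave the central step as a hoped-for inequality of the shape
\[
\gamma(\tilde\phi_{H_k}^{t_2},\tilde\phi_{H_k'}^{t_2})\geq \kappa\int_J(\cdots)\,dt - C\big(e(B_+)+e(B_-)\big),
\]
and this is where the gap lies: no inequality of that integral form is proven, and the actual mechanism is somewhat different.

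What the paper does to fill this gap has two ingredients you are missing. First, a \emph{time rescaling}: rather than integrate over $J$, one replaces $H_k,H_k'$ by $L_k(t,x)=\sigma\delta\,H_k(t_0+\sigma\delta t,x)$ (and similarly $L_k'$) for carefully chosen $\sigma,\delta$, so that the new Hamiltonians live on $[0,1]$, their flows are built from those of $H_k,H_k'$ at times in $J$ (so hypothesis~(i) still gives $\gamma(\tilde\phi_{L_k}^t,\tilde\phi_{L_k'}^t)\to 0$), and the relevant gaps $\inf_{B_+}L-\sup_{B_+}L'$, $\inf_{B_-}L'-\sup_{B_-}L$ now exceed $\chz(B)$ while the oscillations of $L,L'$ on $B_\pm$ are below $\tfrac13\chz(B)$. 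Second, a \emph{cutoff-and-dichotomy argument} (Lemma~\ref{lemma for uniqueness theo}): one uses bump functions to flatten $L_k,L_k'$ to constants on $B_\pm$ at a $\gamma$-cost controlled by those oscillations; the resulting relative Hamiltonian $\bar g\#f$ is then genuinely constant on $B_\pm$ with opposite signs exceeding $\chz(B)$, and Theorem~\ref{theo base rational} (through Corollary~\ref{corol rational 2 open sets}) gives the dichotomy ``either $\gamma\geq\chz$ or $c$ is pinned near the constant value'', from which one extracts $\gamma(\tilde\phi_{L_k}^t,\tilde\phi_{L_k'}^t)\geq\tfrac13\chz(B)$. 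The rationality constant $\Omega$ enters here too: one needs $\chz(B)<\tfrac14\Omega$ so that the ``$+\,k\Omega$'' ambiguity in Theorem~\ref{theo base rational} does not spoil the pinning, and this is why the balls must be taken small. Your sketch anticipates the cost term and the smallness of $e(B_\pm)$, but not the dichotomy structure of the underlying inequality nor the rescaling that converts the time-integral into pointwise size conditions.
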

 Note that, since the spectral pseudo-distance is bounded from above by Hofer's distance, this theorem also holds with $\gamma$ replaced by $\|\cdot\|$. Note also that if $U=M$ and if the sequences consist of normalized Hamiltonians, then $H=H'$. 

Finally, let us emphasize the fact that if spectral invariants descend from $\Hamtilde(M,\omega)$ to $\Ham(M,\omega)$, then $\gamma$ also descends (as a genuine norm) and Theorem \ref{theo unicite complete} holds if we replace (i) by the much weaker assumption:
\begin{itemize}
 \item[(i')]For any $t\in I$,  $\gamma(\phi_{H_k}^t,\phi_{H_k'}^t)$ converges to zero.
\end{itemize}
For example, this is true if we assume the additional (rather strong) assumption that $(M, \omega)$ is weakly exact (that is, $\omega(\pi_2(M))=0$). Another example comes from the third author's \cite{Seyfaddini12}. Assume that $(M, \omega)$ is negatively monotone and that there exists a non-empty open set $V$ such that for all $k$, $H_k$ and $H'_k$ lie in $C_c^\infty(\S^1\times (M\backslash V))$, then Theorem \ref{theo unicite complete} holds under (i') and (ii).

\subsection*{Applications}
Theorem \ref{theo unicite hameo}, which proves that hameotopies have unique normalized generating Hamiltonians, is one of the most foundational results in $C^0$ Hamiltonian dynamics; see \cite{buhovsky-seyfaddini, muller-oh,viterbo2} for some of the consequences of this theorem.  In Section \ref{section preuve unicite}, we will show that Theorem \ref{theo unicite complete} allows us to recover Theorem \ref{theo unicite hameo}; to the best of our knowledge, this is the first proof of Theorem \ref{theo unicite hameo} via Floer-theoretic methods.   

\medskip
In addition to the above, Theorem \ref{theo unicite complete} has other interesting consequences as well. In \cite{humiliere1} (see \cite{humiliere-these} for a better presentation though in French), the first author suggested another attempt of defining continuous Hamiltonian dynamics. The idea is to introduce the abstract completion of the group of Hamiltonian diffeomorphisms with respect to the spectral metric. The paper is written in $\R^{2n}$ but everything there can be done on general, symplectically aspherical, closed manifolds (where, as mentioned above, $\gamma$ descends to a non-degenerate norm on $\Ham(M,\omega)$). On the level of Hamiltonian functions, one can introduce a distance between two Hamiltonians by $$\gamma_u(H,K)=\sup_{t\in[0,1]}\gamma(\phi_H^{t},\phi_K^{t}),$$
and call a ``generalized Hamiltonian'' any element in the completion of the set of smooth Hamiltonians with respect to the distance $\gamma_u$. The canonical map $H\mapsto \phi_H^t$ naturally extends to the completions, and we can speak of the ``flow'' generated by a generalized Hamiltonian.
These completions have applications to the study of Hamilton--Jacobi equations (\cite{humiliere1, humiliere-these}). They are also needed for Viterbo's symplectic homogenization theory \cite{viterbo3}.

The main problem encountered with these completions is that their elements are a priori very abstract objects, that is, equivalence classes of Cauchy sequences for some abstract distance.
However, some elements can be represented by honest continuous functions: Indeed, the inequality $\gamma_u\leq \|\cdot\|_{C^0}$ induces a map $\iota$ from $C^0_c(\S^1\times M)$ to the set of generalized Hamiltonians. It follows that continuous Hamiltonians have a flow in the $\gamma$--completion of the Hamiltonian group.
Like in the case of hameotopies, it is natural to wonder whether the generating continuous Hamiltonian is unique. 
Theorem \ref{theo unicite complete} answers this question positively. 
It says in particular that the map $\iota$ is injective. In other words, the continuous function representing a given generalized Hamiltonian is unique.

 Note that since Theorem \ref{theo unicite complete} holds for any open set $U$, the uniqueness of the continuous generator is actually local. Therefore, the result can be applied to generalized Hamiltonians that can be represented by not everywhere continuous functions. Examples of such elements where provided in \cite{humiliere1}.

\medskip
Theorem \ref{theo unicite complete} also has consequences in terms of $C^0$--rigidity of the Poisson bracket.  Recall that the Poisson bracket of two differentiable functions $F$, $G$ on $M$, with Hamiltonian vector field $X_F$, $X_G$ is given by
$$\{F,G\}=\omega(X_F,X_G).$$
A function $F$ is called a first integral of $G\in C^0_{\Ham}$ if $F$ is constant along the flow of $G$. When $F$ and $G$ are smooth, $F$ is a first integral of $G$ if and only if $\{F,G\}=0$.

As one can see, the Poisson bracket is defined only in terms of the differentials of the involved functions. Nevertheless, it satisfies some rigidity with respect to the $C^0$--topology. This property was first discovered by Cardin and Viterbo \cite{cardin-viterbo}. Their theorem has opened an active domain of research and has been improved in several directions by many authors (see e.g. \cite{buhovsky, buhovsky-entov-polterovich, entov-polterovich2, entov-polterovich-rosen, humiliere2, zapolsky} for some of the strongest results). 

Here, we improve the result of Cardin and Viterbo in a new direction.
\begin{theo}\label{theo Poisson} Let $F_k$ and $G_k$ be two sequences of smooth functions on a closed, rational symplectic manifold $M$ such that:
\begin{itemize}
\item the sequence $F_k$ converges uniformly to some continuous function $F$,
 \item the sequence $G_k$ converges uniformly to some function $G\in C^0_{\Ham}$,
 \item the sequence of Poisson brackets $\{F_k,G_k\}$ converges uniformly to 0.
\end{itemize}
Then, $F$ is a first integral of $G$.
\end{theo}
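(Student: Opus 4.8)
The strategy is to deduce Theorem~\ref{theo Poisson} from Theorem~\ref{theo unicite complete} by building, out of the data $F_k$, $G_k$, two sequences of Hamiltonians whose flows are $\gamma$-close and which converge uniformly to functions that differ exactly by the amount by which $F$ fails to be a first integral of $G$. The natural object to consider is the time-dependent Hamiltonian $F\circ\phi_{G}^{t}$: at the smooth level, the function $F_k\circ\phi_{G_k}^{t}$ generates the conjugated isotopy $\phi_{G_k}^{t}\,\phi_{F_k}^{s}\,(\phi_{G_k}^{t})^{-1}$, and $\frac{d}{dt}\bigl(F_k\circ\phi_{G_k}^{t}\bigr)=\{F_k,G_k\}\circ\phi_{G_k}^{t}$, so the hypothesis $\{F_k,G_k\}\to 0$ uniformly says precisely that the ``drift'' of $F_k$ along the flow of $G_k$ is $C^0$-small, uniformly in $t$.

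First I would fix a time parameter and consider, for each $k$, the two Hamiltonians $H_k := F_k$ (independent of an auxiliary time variable) and $H_k' := F_k\circ\phi_{G_k}^{t}$, where now $t$ plays the role of the ``space of parameters'' interval $I$; more precisely I will work on $I\times M$ with $I$ a small open interval around a chosen time, and regard $t\mapsto F_k\circ\phi_{G_k}^{t}$ as a $t$-family of Hamiltonians (reparametrized in its own internal time $s$). The isotopies generated by these two families are $\phi_{F_k}^{s}$ and $\phi_{G_k}^{t}\phi_{F_k}^{s}(\phi_{G_k}^{t})^{-1}$ respectively; since $\gamma$ (and Hofer's norm) is conjugation-invariant on $\Hamtilde$, we get $\gamma\bigl(\tilde\phi_{H_k}^{s},\tilde\phi_{H_k'}^{s}\bigr)=0$ for every $s$, so hypothesis (i) of Theorem~\ref{theo unicite complete} is satisfied trivially (in fact with zero). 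For hypothesis (ii): $H_k=F_k\to F$ uniformly by assumption, and $H_k'=F_k\circ\phi_{G_k}^{t}\to F\circ h^{t}$ uniformly on $I\times M$, where $h^t$ is the hameotopy generated by $G$ — this uses that $G_k\to G$ in $C^0_{\Ham}$, so the flows $\phi_{G_k}^{t}$ converge uniformly in $t$ to $h^t$, together with uniform continuity of $F$ on the compact $M$ and the uniform convergence $F_k\to F$. Applying Theorem~\ref{theo unicite complete} then yields that $H-H' = F - F\circ h^{t}$ depends only on the parameter $t$; but at $t=0$ (or the left endpoint of $I$, after translating so that $I$ contains $0$, or by a limiting argument) this difference vanishes, hence $F - F\circ h^{t}\equiv 0$ on $I\times M$. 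Since the chosen time was arbitrary, $F\circ h^{t} = F$ for all $t$, i.e.\ $F$ is constant along the flow of $G$, which is the definition of $F$ being a first integral of $G$.

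The two points requiring care — and the second is the main obstacle — are the following. (a) One must check that $t\mapsto F_k\circ\phi_{G_k}^{t}$ really is an admissible Hamiltonian in the sense needed, and that conjugation invariance of $\gamma$ holds in the required (universal-cover) form; both are standard, the latter because conjugation by a Hamiltonian diffeomorphism acts on $\Hamtilde$ and preserves the action spectrum, hence the spectral invariants. (b) The genuinely delicate step is the uniform convergence $F_k\circ\phi_{G_k}^{t}\to F\circ h^{t}$ \emph{uniformly in $t$}: one writes $F_k\circ\phi_{G_k}^{t} - F\circ h^{t} = (F_k - F)\circ\phi_{G_k}^{t} + (F\circ\phi_{G_k}^{t} - F\circ h^{t})$; the first term is bounded by $\|F_k-F\|_{C^0}\to 0$, and the second is controlled by the modulus of continuity of $F$ applied to $\sup_{t\in I, x\in M} d\bigl(\phi_{G_k}^{t}(x), h^{t}(x)\bigr)$, which goes to $0$ precisely by the definition of hameotopy / convergence in $C^0_{\Ham}$. (Here one should be a little careful about whether the hypothesis ``$G_k\to G$ uniformly with $G\in C^0_{\Ham}$'' already forces the flows $\phi_{G_k}^{t}$ to $C^0$-converge to $h^t$; if not, one replaces $G_k$ by the sequence witnessing that $G$ generates $h^t$ and uses uniqueness of the continuous generator, Theorem~\ref{theo unicite complete} applied to $G$, to identify the two.) Finally, the ``vanishing at $t=0$'' step that pins down the constant is immediate since $\phi_{G_k}^{0}=\Id$ gives $H_k'=H_k$ at $t=0$; one only needs $I$ to be chosen containing $0$, which is harmless since the claim is local in $t$ and $t=0$ is not special — equivalently, run the argument on an interval $I$ and note $t\mapsto F - F\circ h^t$ is continuous and constant on $I$, then let $I$ shrink to include $0$.
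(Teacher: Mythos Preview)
Your overall strategy --- apply Theorem~\ref{theo unicite complete} to the pair $(F_k,\, F_k\circ\phi_{\cdot}^{s})$ --- is the same as the paper's, but the execution has a genuine gap at precisely the step you labeled ``trivial''.

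\medskip
\textbf{The conjugation-invariance step is wrong.} Conjugation invariance of $\gamma$ says $\gamma(\tilde\psi^{-1}\tilde\phi\tilde\psi)=\gamma(\tilde\phi)$; it does \emph{not} say $\gamma(\tilde\phi,\tilde\psi^{-1}\tilde\phi\tilde\psi)=0$. The latter quantity equals $\gamma\bigl([\tilde\phi,\tilde\psi]\bigr)$, the norm of a commutator, and there is no reason for it to vanish. So hypothesis~(i) of Theorem~\ref{theo unicite complete} is not established by your argument. In fact, if you repair this the natural way --- using $F_k\circ\phi_{G_k}^{t}-F_k=\int_0^{t}\{F_k,G_k\}\circ\phi_{G_k}^{\sigma}\,d\sigma$ to show the two Hamiltonians are $C^0$-close, hence $\gamma$-close --- then the very same identity forces $F_k\circ\phi_{G_k}^{t}\to F$ uniformly (not to $F\circ h^{t}$), and Theorem~\ref{theo unicite complete} gives you the vacuous conclusion $F-F\equiv\text{const}$.

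\medskip
\textbf{The second issue you flagged is real and not resolved by your parenthetical.} The hypothesis is only that $G_k\to G$ uniformly with $G\in C^0_{\Ham}$; the flows $\phi_{G_k}^{t}$ need not converge to anything. The sequence with converging flows is a (possibly different) sequence $G_k'$ witnessing $G\in C^0_{\Ham}$. You cannot simply ``replace $G_k$ by $G_k'$'': the Poisson bracket hypothesis involves $G_k$, not $G_k'$. The paper resolves this by taking $F_k'=F_k\circ\phi_{G_k'}^{s}$ (so that $F_k'\to F\circ\phi_G^{s}$, giving (ii)) and then proving (i) via the triangle inequality through the intermediate point $\tilde\phi_{G_k}^{-s}\tilde\phi_{F_k}^{t}\tilde\phi_{G_k}^{s}$: one leg is bounded by $s\|\{F_k,G_k\}\|_{C^0}$ (the Lipschitz estimate applied to the integral identity above), and the other leg, $\gamma\bigl(\tilde\phi_{G_k}^{-s}\tilde\phi_{F_k}^{t}\tilde\phi_{G_k}^{s},\,\tilde\phi_{G_k'}^{-s}\tilde\phi_{F_k}^{t}\tilde\phi_{G_k'}^{s}\bigr)$, is bounded by $2\|G_k-G_k'\|_{C^0}$ using bi-invariance of $\gamma$. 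Both tend to $0$. This splitting is the missing idea in your proposal.
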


In particular, the theorem holds when $F$ is $C^0$ and $G$ is $C^{1,1}$.
The result of Cardin and Viterbo was the same theorem but with both $F$ and $G$ of class $C^{1,1}$. In our case where $F$ is only $C^0$, the proof is made more difficult by the fact that $F$ does not have any flow in general. 

 After the first version of this paper was written, we were informed by Buhovsky that it is possible to prove Theorem \ref{theo Poisson} using the energy-capacity inequality. Furthermore, Buhovsky's method allows him to remove the rationality assumption in the statement of the theorem.

\medskip
Theorem \ref{theo Poisson} allows us to relate two notions of Poisson commutativity for continuous Hamiltonians. First recall the definition proposed by Cardin and Viterbo \cite{cardin-viterbo}: Two continuous functions $C^0$--\emph{commute} if they are uniform limits of functions whose Poisson bracket uniformly converges to 0. Another definition of commutativity for functions in $C^0_{\Ham}$ would simply be that their flows commute. Theorem \ref{theo Poisson} has the following immediate corollary. 
\begin{corol} If two functions in $C^0_{\Ham}$ commute in the sense of Cardin and Viterbo, then the hameotopies they generate commute.\end{corol}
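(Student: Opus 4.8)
\medskip
\noindent\emph{Proof proposal.}
The plan is to deduce the commutation of the two hameotopies from Theorem~\ref{theo Poisson} together with the uniqueness of the hameotopy generated by a continuous function. Denote by $h_F^t$ and $h_G^t$ the hameotopies generated by $F$ and $G$, and fix $s\in[0,1]$. Since $F,G\in C^0_{\Ham}$, there are smooth Hamiltonians $\tilde F_k$ and $\tilde G_k$ whose flows converge in $C^0$, uniformly in time, to $h_F^t$ and $h_G^t$ respectively, and which converge uniformly to $F$ and $G$. The first --- and main --- step is a ``$C^0$ conjugation lemma'': the isotopy $t\mapsto h_G^s\circ h_F^t\circ (h_G^s)^{-1}$ is again a hameotopy, and its generating continuous Hamiltonian is $F\circ (h_G^s)^{-1}$. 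Indeed, the standard conjugation identity for Hamiltonian flows gives $\phi_{\tilde G_k}^s\circ\phi_{\tilde F_k}^t\circ(\phi_{\tilde G_k}^s)^{-1}=\phi_{K_k}^t$ with $K_k=\tilde F_k\circ(\phi_{\tilde G_k}^s)^{-1}$; letting $k\to\infty$ and using that composition and inversion of homeomorphisms of the compact manifold $M$ are continuous for the $C^0$--topology, the left-hand side converges, uniformly in $t$, to $h_G^s\circ h_F^t\circ(h_G^s)^{-1}$, while $K_k$ converges uniformly to $F\circ (h_G^s)^{-1}$ because $F$ is uniformly continuous and $(\phi_{\tilde G_k}^s)^{-1}\to (h_G^s)^{-1}$ uniformly. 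This is exactly the definition of a hameotopy generated by $F\circ (h_G^s)^{-1}$.

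For the second step, recall that $F$ and $G$ $C^0$--commute in the sense of Cardin and Viterbo: there are smooth functions $F_k\to F$ and $G_k\to G$ uniformly (possibly different from the $\tilde F_k,\tilde G_k$ above) with $\{F_k,G_k\}\to 0$ uniformly. Since $G\in C^0_{\Ham}$, Theorem~\ref{theo Poisson} applies and shows that $F$ is a first integral of $G$, i.e. $F\circ h_G^t=F$ for all $t$; in particular $F\circ (h_G^s)^{-1}=F$. By Step~1, the hameotopy $t\mapsto h_G^s\circ h_F^t\circ (h_G^s)^{-1}$ is then generated by $F$, the same continuous Hamiltonian as $h_F^t$. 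By the uniqueness of the hameotopy generated by a continuous function \cite{muller-oh}, we get $h_G^s\circ h_F^t\circ (h_G^s)^{-1}=h_F^t$, that is, $h_G^s\circ h_F^t=h_F^t\circ h_G^s$, for every $t$. As $s\in[0,1]$ was arbitrary, the two hameotopies commute.

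The main obstacle is Step~1: one must verify that the conjugate of a hameotopy by a hameomorphism is again a hameotopy and identify its generating Hamiltonian through the expected change of variables. The only delicate point is to make the $C^0$--estimates uniform in the time parameter $t$; this is routine once one notes that $(t,x)\mapsto h_F^t(x)$ is uniformly continuous on $[0,1]\times M$ (so the family $\{h_F^t\}_{t\in[0,1]}$ is equicontinuous) and that, among homeomorphisms of a compact manifold, $g_k\to g$ uniformly implies $g_k^{-1}\to g^{-1}$ uniformly. Everything else --- the smooth conjugation identity, the passage to the limit, and the appeal to uniqueness --- is formal.
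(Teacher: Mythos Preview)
Your proof is correct and is precisely the argument the paper has in mind when it calls the corollary ``immediate'' from Theorem~\ref{theo Poisson}: apply the theorem to get $F\circ (h_G^s)^{-1}=F$, use the smooth conjugation identity $\psi\circ\phi_H^t\circ\psi^{-1}=\phi_{H\circ\psi^{-1}}^t$ along the approximating sequences to identify $h_G^s\circ h_F^t\circ(h_G^s)^{-1}$ as the hameotopy generated by $F\circ(h_G^s)^{-1}=F$, and conclude by the Oh--M\"uller uniqueness of the hameotopy generated by a given continuous function. The details you supply for Step~1 (uniform $C^0$--limits of compositions and inverses on a compact manifold) are exactly the routine verifications needed.
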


\subsection*{Key technique involved in the proof of the main result}

In order to prove Theorem \ref{theo unicite complete}, we first establish a new variant of the energy-capacity inequality for closed monotone symplectic manifolds. 

We denote by $\chz$ the following version of the Hofer--Zehnder capacity: For an open set $U$, $$\chz(U)=\sup\{\max f\,|\,f\in C_c^\infty(U) \text{ slow and non-negative}\}.$$ 
Recall that $H$ is called \emph{slow} if its Hamiltonian flow $\{\phi_H^t\}_t$ has no non-trivial orbits of period at most $1$. As an example, it is well known that for a symplectic ball $B$ of radius $r$, $\chz(B)=\pi r^2$. 

Since these types of capacities are defined in a very different fashion than the action selector $c$ -- as well as other natural invariants like displacement energy --, comparison between them (energy-capacity-like inequalities) leads to interesting consequences (see e.g \cite[Theorem 1]{FGS} for such relations and further applications).  The key result toward our proof of Theorem \ref{theo unicite complete} is the following set of energy-capacity-like inequalities.
\begin{theo}\label{theo base}
  Let $(M, \omega)$ denote a monotone symplectic manifold.  Suppose that $U$ is an open subset of $M$ and $H$ is a smooth Hamiltonian such that $ \forall (t,x) \in [0,1] \times U$ we have $H(t,x) = C$.  Then, at least one of the following two possibilities holds:
  \begin{enumerate}
  \item $\gamma(\tilde{\phi}_H^1)=c(H) + c(\bar{H}) \geq \chz(U)$,
  \item $  |c( H) - C|  \leq  \chz(U)$ and $  |c( \bar{H}) + C|  \leq \chz(U)$.
  \end{enumerate}
\end{theo}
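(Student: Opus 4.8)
The plan is to exploit the Hofer--Zehnder capacity via a displacement/selector argument, combined with the triangle inequality and the spectral estimate for autonomous slow Hamiltonians. Fix a slow function $f\in C_c^\infty(U)$ with $\max f$ as large as we like (below $\chz(U)$); since $H$ is constant equal to $C$ on $[0,1]\times U$, the function $f$ (viewed as a time-independent Hamiltonian) and $H$ interact in a controlled way because their supports/level behavior overlap on $U$. The key point is the standard fact that for a slow autonomous Hamiltonian $f$ supported in $U$, one has the two-sided control $c(f)=\max f$ and $c(\bar f)=-\min f=0$ (for $f\geq 0$; in general $c(f)\le \max f$ and $c(\bar f)\le -\min f$, both with the relevant sharpness coming from slowness and the fact that the only $1$-periodic orbits are constants), so $\gamma(\tilde\phi_f^1)\le \chz(U)$ after taking the supremum over such $f$.

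First I would reduce to the normalized setting: replacing $H$ by $H-C$ changes $c(H)$ by $-C$ and $c(\bar H)$ by $+C$ (by the shift property $c(H+c(t))=c(H)+\int_0^1 c(t)\,dt$ and the compatibility of $\bar{\cdot}$ with constants), so that conclusion (2) becomes exactly $|c(H-C)|\le \chz(U)$ and $|c(\overline{H-C})|\le\chz(U)$, i.e. we may as well assume $C=0$ and prove: either $\gamma(\tilde\phi_H^1)\ge\chz(U)$, or both $|c(H)|\le\chz(U)$ and $|c(\bar H)|\le\chz(U)$. Now suppose conclusion (2) fails, say $|c(H)|>\chz(U)$ (the case $|c(\bar H)|>\chz(U)$ is symmetric by swapping $H\leftrightarrow\bar H$). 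Pick a slow $f\in C_c^\infty(U)$ with $f\le 0$ (or $f\ge0$, depending on the sign of $c(H)$) and $\|f\|_{C^0}$ close to $\chz(U)$. On $U$ we have $H\equiv0$, hence $H+f=f$ there, and more importantly the flow of $f$ commutes with that of $H$ near where it matters — actually the cleaner route is: since $H$ vanishes on $U\supseteq\mathrm{supp}(f)$, the Hamiltonian $H\# f$ (composition) has $c$-invariant comparable to $c(H)$ up to the spectral norm of $\phi_f$, because $f$ "lives where $H$ is trivial." Concretely, by the triangle inequality for spectral invariants,
\[
c(H) \le c(H\# f) + c(\bar f) \le c(H\# f) + \chz(U),
\]
and symmetrically $c(H\# f)\le c(H)+c(f)\le c(H)+\chz(U)$, with analogous inequalities for $\bar H$. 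The real content is then an \emph{action-level comparison}: because $f$ is slow and supported in the region where $H\equiv0$, one shows that $c(H\# f)$ and $c(H)$ are forced apart by at least $\min f$ (resp. $\max f$) unless a periodic orbit of $H\# f$ of the relevant period appears — and slowness of $f$ prevents short orbits living in $U$, so any such orbit must be an orbit of $H$ itself, which gives back $\gamma(\tilde\phi_H^1)\ge\chz(U)$.

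I would organize the decisive step as follows: assume $c(H)>\chz(U)$; choose slow $f\le 0$ supported in $U$ with $-\min f$ close to $\chz(U)$, so $c(f)=0$ and $c(\bar f)=-\min f\approx\chz(U)$. Consider the path from $\Id$ to $\tilde\phi_H^1$ and compare the action selectors of $H$ and of $H$ followed by the flow of $f$; using that $H$ is constant on $U$, a critical point of the action functional of $H\# f$ either lies outside $U$ (where $f=0$, so it is a critical point of $H$, contributing to $\Spec(H)$) or lies inside $U$, where by slowness of $f$ it must be a fixed point of $\phi_f^t$ for all $t$, i.e. again essentially a critical point of $H$ shifted by the constant value $0$. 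Either way the spectral value $c(H\# f)$ lies in $\Spec(H)$-related values, and combining $c(H\#f)\ge c(H)-c(\bar f)\ge c(H)-\chz(U)>0$ with the identity $\gamma(\tilde\phi_{H\# f}^1)=c(H\# f)+c(\overline{H\# f})$ and the subadditivity $\gamma(\tilde\phi_{H\# f}^1)\le \gamma(\tilde\phi_H^1)+\gamma(\tilde\phi_f^1)\le\gamma(\tilde\phi_H^1)+\chz(U)$ yields, after letting $-\min f\to\chz(U)$, the bound $\gamma(\tilde\phi_H^1)\ge\chz(U)$, which is conclusion (1).

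The main obstacle I anticipate is making rigorous the claim that the action selector $c(H\# f)$ is genuinely \emph{pushed} by the full amount $-\min f$ (and not merely bounded above/below by the easy triangle inequalities, which only give a two-sided estimate consistent with both conclusions). This requires a careful Floer-theoretic argument — either a continuation/neck-stretching estimate exploiting that $f$ is a \emph{slow} autonomous Hamiltonian whose support sits inside the locus $\{H\equiv C\}$, so that one can deform $H\# f$ to $H$ through Hamiltonians controlling the action window, or a direct min-max comparison on the relevant Floer complexes. In other words, the heart of the proof is a localized energy-capacity inequality: if $\gamma(\tilde\phi_H^1)<\chz(U)$, then the "room" left by $\gamma$ forces $c(H)$ and $c(\bar H)$ to stay within $\chz(U)$ of their values $C$, because otherwise one could insert a slow bump in $U$ witnessing more spectral displacement than $\gamma(\tilde\phi_H^1)$ allows. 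Everything else (normalization reduction, the symmetric role of $H$ and $\bar H$, the $C^0$-vs-spectral bounds for slow autonomous Hamiltonians) is routine given the properties of spectral invariants recalled earlier in the paper.
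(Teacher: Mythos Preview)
Your proposal has a genuine gap, and the missing ingredient is not the one you anticipate. You correctly reduce to $C=0$, correctly observe that $H\#f=H+f$ (since $X_H$ vanishes on $U$, so $\phi_H^t$ fixes $U$ pointwise), and correctly describe the spectrum of $H+f$ as $\Spec(H)$ together with action values at critical points of $f$. But your triangle-inequality estimates $c(H\#f)\ge c(H)-c(\bar f)$ and $\gamma(\tilde\phi_{H\#f}^1)\le\gamma(\tilde\phi_H^1)+\gamma(\tilde\phi_f^1)$ are, as you yourself say, compatible with both conclusions of the theorem failing; they never produce the lower bound on $\gamma(\tilde\phi_H^1)$ you claim at the end of your third paragraph. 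And the fix you propose---a continuation or neck-stretching estimate forcing $c(H\#f)$ to move by the full amount $\max f$---is not how the argument goes, nor is it clear how such an estimate would use monotonicity.

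The paper's mechanism is a \emph{spectrality-plus-index} argument. One considers the family $H_s=H+sf$, $s\in[0,1]$, and uses that $s\mapsto c(H_s)$ is continuous and takes values in $\Spec(H_s)=\Spec(H)\cup\{sf(p)-\omega(\Sigma):p\in\Crit(f),\ \Sigma\in\pi_2(M)\}$. There is then a dichotomy: either $c(H_s)\in\Spec(H)$ for all $s$ (hence constant, since $\Spec(H)$ has measure zero), giving $c(H)=c(H+f)$ and then $\max f=c(f)\le c(\bar H)+c(H+f)=\gamma(\tilde\phi_H^1)$, which is conclusion~(1); or else, at the first $s_0$ where $c(H_s)$ leaves $\Spec(H)$, one has $c(H)=c(H_{s_0})=s_0 f(p)-\omega(\Sigma)$ for some critical point $p$ of $f$ and $\Sigma\in\pi_2(M)$. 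The step you are entirely missing is the control of $\omega(\Sigma)$: after arranging via Usher's theorem that the critical points of $f$ inside its support are flat and Morse, the paper's Conley--Zehnder index lemma forces $\mu_{\mathrm{CZ}}([p,\Sigma])=2n$, hence $i_{\text{Morse}}(p)-2c_1(\Sigma)=2n$ and $c_1(\Sigma)\le 0$. Only now does monotonicity $\omega=\lambda c_1$ enter, converting this into a sign constraint on $\omega(\Sigma)$ and yielding $|c(H)|\le\chz(U)$ (and symmetrically for $\bar H$), which is conclusion~(2). Your proposal never invokes the Conley--Zehnder index, flatness, or the monotonicity hypothesis; without these the term $\omega(\Sigma)$ is uncontrolled, which is precisely why the merely rational case gives only the weaker statement of Theorem~\ref{theo base rational}.
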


We will prove Theorem \ref{theo base} in Section \ref{section theo base}. It is evident from our proof that if $(M, \omega)$ is positively monotone and the second of the above possibilities holds, then the numbers ($c(H) - C)$ and $(c(\bar{H}) + C)$ are always non-negative.  

The above result, combined with the fact that $c$ and $\gamma$ are both bounded by the Hofer norm (see Section \ref{section spectral invariants}), has an immediate corollary.

\begin{corol}Let $(M, \omega)$ denote a monotone symplectic manifold, $U$ an open subset of $M$, and $H$ a normalized and smooth Hamiltonian such that for any $(t,x) \in [0,1] \times U$ we have $H(t,x) = C \geq 2 \chz(U)$.  Then $||\tilde{\phi}_H^1||\geq \chz(U)$.
\end{corol}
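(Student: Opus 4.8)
The plan is to deduce this directly from Theorem~\ref{theo base} by ruling out its second alternative, using the continuity of spectral invariants and the normalization hypothesis. First I would apply Theorem~\ref{theo base} to the given Hamiltonian $H$ and the open set $U$, on which $H \equiv C$. If the first possibility holds, then $\|\tilde\phi_H^1\| \geq \gamma(\tilde\phi_H^1) = c(H)+c(\bar H) \geq \chz(U)$, since $\gamma \leq \|\cdot\|$, and we are done. So the real work is to show that the second possibility is incompatible with the standing hypotheses $C \geq \chz(U)$ and $H$ normalized, or — more precisely — that even if the second possibility holds it still forces $\|\tilde\phi_H^1\| \geq \chz(U)$.

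The key point I would use is the remark following Theorem~\ref{theo base}: if $(M,\omega)$ is positively monotone and the second alternative holds, then $c(H)-C \geq 0$ and $c(H)+C \geq 0$; but for the general (possibly negatively) monotone case one must argue a little differently. In the positively monotone case, $c(H) \geq C \geq \chz(U)$ immediately, and then since $H$ is normalized one has the standard inequality $c(H) \leq \|\tilde\phi_H^1\|$ (the spectral invariant of a normalized Hamiltonian is bounded by its Hofer norm), giving $\|\tilde\phi_H^1\| \geq \chz(U)$. So in that case both alternatives yield the conclusion. For the negatively monotone case I would instead combine the two inequalities in alternative (2): from $|c(\bar H)+C| \leq \chz(U)$ we get $c(\bar H) \geq -C - \chz(U)$, and I would look for the sign information coming out of the proof of Theorem~\ref{theo base} (the analogue of the positively-monotone remark) to pin down $c(H) \geq C$ again, after which normalization and $c(H) \leq \|\tilde\phi_H^1\|$ finish it as before.

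Concretely, the steps in order are: (1) invoke Theorem~\ref{theo base} for $(H,U,C)$; (2) in case~(1) of that theorem, chain $\|\tilde\phi_H^1\| \geq \gamma(\tilde\phi_H^1) \geq \chz(U)$; (3) in case~(2), use the sign refinement noted after Theorem~\ref{theo base} to obtain $c(H) \geq C \geq \chz(U)$; (4) use that $H$ is normalized to get $c(H) \leq \|\tilde\phi_H^1\|$; (5) conclude $\|\tilde\phi_H^1\| \geq \chz(U)$ in all cases. Note that the hypothesis $C \geq \chz(U)$ is used precisely in step~(3) to convert the bound $c(H) \geq C$ into $c(H) \geq \chz(U)$.

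The main obstacle is step~(3): extracting the correct sign of $c(H)-C$ in the second alternative, uniformly over monotone manifolds including the negatively monotone case. The excerpt only explicitly records the sign statement for positively monotone $M$, so either the corollary is implicitly understood for positively monotone manifolds (in which case the argument above is complete), or one needs the full internal bookkeeping of the proof of Theorem~\ref{theo base} to see that the relevant spectral invariant is at least $C$. I would therefore phrase the proof so that it clearly reduces to that sign fact, citing the proof of Theorem~\ref{theo base}, rather than re-deriving it. Everything else — the comparison $\gamma \leq \|\cdot\|$ and the bound $c(\,\cdot\,) \leq \|\cdot\|$ for normalized Hamiltonians — is standard and stated (or immediate) from the material recalled in the introduction and Section~\ref{section spectral invariants}.
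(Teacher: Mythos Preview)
Your proposal is essentially the paper's own argument. The paper does not give a proof of this corollary beyond the one-line remark that it follows from Theorem~\ref{theo base} together with the continuity properties of spectral invariants; the intended reading is exactly your steps (1)--(5): in case~(1) of Theorem~\ref{theo base} use $\gamma\leq\|\cdot\|$, in case~(2) use the sign refinement $c(H)-C\geq 0$ recorded just before the corollary, and then invoke the continuity estimate (Property~(3) of Proposition~\ref{Properties of Spectral Invariants}) plus normalization to get $c(H)\leq\|\tilde\phi_H^1\|$.

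You are also right to flag the negatively monotone case as the obstacle. The paper only asserts the sign $c(H)-C\geq 0$ when $(M,\omega)$ is positively monotone, and indeed the internal bookkeeping of the proof of Theorem~\ref{theo base} in the case $\lambda\leq 0$ yields the opposite inequality $c(H)-C\leq s_0 f(p)\leq \chz(U)$ rather than a lower bound on $c(H)-C$. So the ``analogue of the positively-monotone remark'' you hope for in step~(3) does not come out of that proof; the argument as written (both yours and the paper's) is complete only in the positively monotone case. This is a genuine gap in the corollary as stated, not a defect of your proposal relative to the paper.
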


Our proof of Theorem \ref{theo base} relies on the discreteness of $\omega(\pi_2(M))$ and hence it does not extend 
to irrational manifolds.  However, when $(M, \omega)$ is rational, but not monotone, we can prove a weaker version of Theorem \ref{theo base} which is sufficient for the applications considered in this article. 

\begin{theo}\label{theo base rational}
  Let $(M, \omega)$ denote a rational symplectic manifold.  Suppose that $U$ is an open subset of $M$ and $H$ is a smooth Hamiltonian such that $ \forall (t,x) \in [0,1] \times U$ we have $H(t,x) = C $. Then, at least one of the following two possibilities holds:
  \begin{enumerate}
  \item $\gamma(\tilde{\phi}_H^1)=c(H) + c(\bar{H}) \geq \chz(U)$,
  \item there exist $k$ and $\bar{k} \in \mathbb{Z}$, depending on $H$, such that:\\ $0 \leq c( H) - C  - k \Omega \leq  \chz(U)$ and $0 \leq c( \bar{H}) + C + \bar{k} \Omega \leq \chz(U)$.
  \end{enumerate}  
Moreover, if $\chz(U) < \frac{1}{2} \Omega$ then we may choose $k=\bar{k}$.
\end{theo}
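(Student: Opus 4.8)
The plan is to run the argument that proves Theorem~\ref{theo base} (carried out in Section~\ref{section theo base}) and to isolate the single point at which monotonicity intervenes, replacing it by the weaker information available when $\omega(\pi_2(M))=\Omega\Z$ is merely discrete.

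\emph{Reduction and set-up.} First I would reduce to $C=0$. Replacing $H$ by $H-C$ (a constant in $(t,x)$) changes neither the flow $\phi_H^t$ nor, by the translation property of spectral invariants, the quantities $c(H)-C$ and $c(\bar H)+C$: one checks $c(H-C)=c(H)-C$ and $\overline{H-C}=\bar H+C$, whence $c(\overline{H-C})=c(\bar H)+C$. So we may assume $H\equiv 0$ on $[0,1]\times U$, which turns conclusion (2) into ``$0\le c(H)-k\Omega\le\chz(U)$ and $0\le c(\bar H)+\bar k\Omega\le\chz(U)$''. The key geometric consequence of $H\equiv 0$ on $U$ is that $dH_t$, hence $X_{H_t}$, vanishes on $U$, so $\phi_H^t$ fixes $U$ pointwise for every $t$; the same holds for $\bar H$, which also vanishes on $[0,1]\times U$, so $H$ and $\bar H$ play symmetric roles. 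Moreover, for any $f\in C^\infty_c(U)$ one gets $f\circ\phi_H^t=f$, hence $\phi_H^t$ and $\phi_f^t$ commute and all the relevant concatenations collapse to sums, e.g.\ $H\# f=H+f$ and $\overline{H\# f}=\bar H-f$.

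\emph{Core of the proof.} Assume conclusion (1) fails, i.e.\ $\gamma(\tilde\phi_H^1)=c(H)+c(\bar H)<\chz(U)$, and fix a slow $f\in C^\infty_c(U)$ with $f\ge 0$ and $\gamma(\tilde\phi_H^1)<\max f=:L<\chz(U)$. As in the proof of Theorem~\ref{theo base} one studies $c(H+f)$: by monotonicity and the triangle inequality it lies in $[c(H),\,c(H)+L]$, and it is an action value of $\phi_{H+f}^1=\phi_H^1\phi_f^1$. Since $\phi_H^1$ is the identity on $U$ and $f$ is slow, the contractible $1$-periodic orbits carrying this action value are either orbits of $\phi_H^1$ lying outside $U$ (action in $\Spec(H)$) or constant orbits at critical points of $f$ inside $U$, whose action lies in $[0,L]$ up to a capping term in $\omega(\pi_2(M))=\Omega\Z$. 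One shows that, once (1) fails, $c(H)$ is carried by an orbit of the second type. In the monotone case the Conley--Zehnder grading pins the capping term down exactly, giving $|c(H)|\le\chz(U)$; in the merely rational case the grading only determines the capping modulo $\Omega\Z$, so one obtains an integer $k$ with $0\le c(H)-k\Omega\le L$. The symmetric argument for $\bar H$ yields $\bar k$ with $0\le c(\bar H)+\bar k\Omega\le L$. Reinstating $C$ and letting $L\nearrow\chz(U)$ gives conclusion (2). The main obstacle is precisely this step: controlling the capping area — equivalently the $\Omega\Z$-coset — of the orbit carrying the spectral invariant, once the index argument of the monotone case is unavailable.

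\emph{The refinement when $\chz(U)\le\tfrac12\Omega$.} Here I would keep $\max f=L<\tfrac12\Omega$ (possible since $L<\chz(U)\le\tfrac12\Omega$) and add the two estimates above:
$$0\ \le\ \gamma(\tilde\phi_H^1)+(\bar k-k)\Omega\ \le\ 2L.$$
Since $\gamma(\tilde\phi_H^1)\ge 0$ and $2L\ge\gamma(\tilde\phi_H^1)$ (as $L>\gamma(\tilde\phi_H^1)$), the interval $[-\gamma(\tilde\phi_H^1),\,2L-\gamma(\tilde\phi_H^1)]$ contains $0$ and, having length $2L<\Omega$, contains no other multiple of $\Omega$; hence $(\bar k-k)\Omega=0$, i.e.\ $k=\bar k$. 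Finally, letting $L\nearrow\chz(U)$: since $c(H)$ and $C$ are fixed and $\chz(U)<\Omega$, the constraint $k\Omega\in[c(H)-C-L,\,c(H)-C]$, an interval of length $<\Omega$, forces $k$ to be eventually independent of $L$, and likewise for $\bar k$, so the equality $k=\bar k$ survives in the limit. This yields the stated refinement.
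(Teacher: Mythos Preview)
Your approach is correct and matches the paper's: Section~\ref{section theo base} proves Theorems~\ref{theo base} and~\ref{theo base rational} in a single argument via the family $H_s=H+sf$, showing that if $s\mapsto c(H_s)$ ever leaves $\Spec(H)$ then at the infimum $s_0$ one has $c(H)=c(H_{s_0})=s_0f(p)-\omega(\Sigma)$ with $\omega(\Sigma)\in\Omega\Z$, and your refinement argument (adding the two estimates and trapping $(\bar k-k)\Omega$ in an interval of length $<\Omega$ containing $0$) is exactly the paper's. Two small points: in the rational case the paper uses no Conley--Zehnder information whatsoever, only the discreteness of $\omega(\pi_2(M))$; and your limit $L\nearrow\chz(U)$ is unnecessary, since $0\le s_0 f(p)\le \max f\le \chz(U)$ already gives the desired bound for a single choice of $f$ (and hence a single $k$, $\bar k$).
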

Theorem \ref{theo base rational} will be proven in Section \ref{section theo base}; note that it is trivially true if $\chz(U) \geq \Omega$.

Currently \cite{HLS}, we are in the process of proving energy-capacity-type inequalities, in the spirit of those appearing in this section, for Lagrangian spectral invariants as defined by Viterbo \cite{viterbo1} for cotangent bundles or by Leclercq \cite{Leclercq08} for weakly exact Lagrangians in compact manifolds.  Such inequalities could be potentially very helpful in obtaining new rigidity results for Lagrangian submanifolds.

\subsection*{Extension to non-closed manifolds}
In this article, we have written our results for closed manifolds only, but each of them can be adapted to non-closed manifolds as soon as spectral invariants are properly defined and satisfy the standard properties (see Proposition \ref{Properties of Spectral Invariants} below). Of course, in this case, we only consider compactly supported Hamiltonians. (Note that in non-compact manifolds the requirement for a Hamiltonian to have compact support is a natural -- and commonly used -- normalization condition.)

Frauenfelder and Schlenk \cite{frauenfelder-schlenk} defined the spectral invariant $c$ on any weakly exact convex at infinity symplectic manifold. This has been extended to more general convex at infinity symplectic manifolds by Lanzat \cite{lanzat}. In the special case of $\R^{2n}$ spectral invariants can be defined using generating functions instead of Floer homology following Viterbo \cite{viterbo1}. Our results also extend to this setting.

\subsection*{Acknowledgments} 
This paper solves a question which remained open in the first author's Ph.D. thesis defended a few years ago. The first author is grateful to his former supervisor Claude Viterbo for his support and to Patrick Bernard for interesting discussions around these questions, as he was finishing his Ph.D. He also thanks Felix Schlenk for answering his questions on the action selector on convex non-closed symplectic manifolds. The second and the third authors are grateful to the first author for generously inviting them to join the project at an intermediate stage.  The third author would like to thank Lev Buhovsky, Leonid Polterovich, and Alan Weinstein for helpful discussions.  Finally, we would like to thank an anonymous referee for carefully reading this paper and pointing out some inaccuracies.

This work is partially supported by the French \emph{Agence Nationale de la Recherche}, project ANR-11-JS01-010-01.

\section{A review of spectral invariants}\label{section spectral invariants}

In this section we briefly review the theory of spectral invariants on closed symplectic manifolds.  For further details we refer the interested reader to \cite{mcDuff-salamon, Oh, schwarz}.  
   
   Denote by $\Omega_0(M)$ the space of contractible loops in $M$ and let $\Gamma:= \frac{\pi_2(M)}{\ker(c_1) \cap \ker([\omega])}$.  It is the group of deck transformations of the Novikov covering of $\Omega_0(M)$, which is defined by the following expression:
 $$\tilde{\Omega}_0(M) = \frac{ \{ [z,u]: z \in \Omega_0(M) , u: D^2 \rightarrow M , u|_{\partial D^2} = z \}}{[z,u] = [z', u'] \text { if } z=z' \text{ and } \bar{u} \# u' = 0 \text{ in } \Gamma},$$
   where $\bar{u} \# u'$ denotes the sphere obtained by gluing $u$ and $u'$ along their common boundary with the orientation on $u$ reversed.  The disc $u$, appearing in the above definition, is referred to as a capping disc of $z$.  Recall that the action functional of a Hamiltonian $H$ is a map from $\tilde{\Omega}_0(M)$ to $\mathbb{R}$ defined by
   $$\mathcal{A}_H([z,u]) =  \int_{\S^1} H(t,z(t))dt \text{ }- \int_{D^2} u^*\omega.$$
   It is well known that the set of critical points of $\mathcal{A}_H$, denoted by $\Crit(\mathcal{A}_H)$, consists of equivalence classes of pairs,  $[z,u] \in \tilde{\Omega}_0(M)$, such that $z$ is a $1$--periodic orbit of the Hamiltonian flow $\phi^t_H$.  The set of critical values of $\mathcal{A}_H$ is called the action spectrum of $H$ and is denoted by $\Spec(H)$; it has Lebesgue measure zero. When $H$ is non-degenerate, the set $\Crit(\mathcal{A}_H)$ can be indexed by the well known Conley--Zehnder index, $\mu_\mathrm{CZ}: \Crit(\mathcal{A}_H) \rightarrow \mathbb{Z}$, for every $\Sigma \in \Gamma$, the Conley--Zehnder index satisfies
   \begin{equation}\label{CZ-index identity}
   \mu_\mathrm{CZ}([z,u\#\Sigma]) = \mu_\mathrm{CZ}([z,u]) - 2 c_1(\Sigma).
   \end{equation}
   Several conventions are used for defining this index.  We fix our convention in the following fashion: suppose that $g$ is a $C^2$--small Morse function.  For every critical point $p$ of $g$, we require that 
$$i_{\text{Morse}}(p) = \mu_\mathrm{CZ}([p, u_p]),$$
where $i_{\text{Morse}}(p)$ is the Morse index of $p$ and $u_p$ is the trivial capping disc. 
Notice that the set of equivalence classes of pairs $[p,u]$ consists of equivalence classes $[p,\Sigma]$, with $\Sigma$ in $\pi_2(M)$ (and $[p,\Sigma] = [p,\Sigma']$ if $\omega(\Sigma)=\omega(\Sigma')$) and that with our convention $\mu_\mathrm{CZ}([p,\Sigma]) = i_{\text{Morse}}(p) -2 c_1(\Sigma)$. This observation will be useful in the proofs of Theorems \ref{theo base} and \ref{theo base rational}.

Spectral invariants, or action selectors, are defined via Hamiltonian Floer theory.  The procedure consists of filtering Floer homology by the values of the action functional and then associating to quantum cohomology classes (seen as Floer homology classes via the so-called PSS homomorphism \cite{PSS}) the minimal action level at which they appear in the filtration. As mentioned in the introduction, the specific spectral invariant used in this article, denoted by $c(H)$ for $H \in C^{\infty}(\S^1 \times M)$, is the one associated to the neutral element $1\in QH^*(M)$.  We will now list, without proof, the basic properties of this spectral invariant.   Recall that the composition of two Hamiltonian flows, $\phi^t_H \circ \phi^t_G$, and the inverse of a flow, $(\phi^t_H)^{-1},$ are Hamiltonian flows generated by $H\#G(t,x) = H(t,x) + G(t, (\phi^t_H)^{-1}(x))$ and $\bar{H}(t,x) = -H(t, \phi^t_H(x))$, respectively.
\begin{prop} \label{Properties of Spectral Invariants}(\cite{Oh2, Oh, schwarz, usher})\\
    The spectral invariant $c: C^{\infty}(\S^1 \times M)  \rightarrow \mathbb{R}$ has the following properties:
    \begin{enumerate}   
    \item  (Shift) If $r : \S^1 \rightarrow \mathbb{R}$ is smooth then $c(H+r) = c(H) + \int_{\S^1}{r(t) dt}.$
    \item (Triangle Inequality) $c(H\#G) \leq c(H) + c(G)$.
    \item (Continuity)  $|c(H) - c(G)| \leq \int_{\S^1} \max_{x\in M} |H_t -G_t|  dt.$
    \item (Spectrality) If $(M, \omega)$ is rational, then there exists $[z,u] \in \Crit(\mathcal{A}_H)$ such that $c(H) = \mathcal{A}_H([z,u])$, i.e. $c(H) \in \Spec(H)$.  Furthermore, if $H$ is non-degenerate then $\mu_\mathrm{CZ}([z,u]) = 2n$.  
    \item (Homotopy Invariance) Suppose that $H$ and $G$ are normalized and generate the same element of $\widetilde{Ham}(M)$.  Then, $c(H) = c(G)$.  
    \end{enumerate}   
   \end{prop}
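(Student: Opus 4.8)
The plan is to build all five properties from the standard Floer--theoretic construction of $c$ together with one key analytic estimate, working first with a non-degenerate $H$ and then extending by continuity. So I would begin by recalling the setup: the Floer chain complex $CF_*(H)$, freely generated over the Novikov ring by $\Crit(\mathcal{A}_H)$, with differential counting negative gradient flow lines of $\mathcal{A}_H$; the action filtration by subcomplexes $CF_*^{(-\infty,a)}(H)$ with homologies $HF_*^{(-\infty,a)}(H)$; and the PSS isomorphism \cite{PSS} $\mathrm{PSS}_H\colon QH^*(M)\xrightarrow{\ \sim\ }HF_*(H)$. Writing $[1]_H:=\mathrm{PSS}_H(1)$, one sets
$$c(H)=\inf\bigl\{a\in\R\ :\ [1]_H\in\operatorname{Im}\bigl(HF_*^{(-\infty,a)}(H)\to HF_*(H)\bigr)\bigr\}.$$
The first real step is the energy estimate for continuation maps: a monotone homotopy from $H$ to $G$ induces a PSS--compatible map $HF_*^{(-\infty,a)}(H)\to HF_*^{(-\infty,a+E)}(G)$ with $E=\int_{\S^1}\max_{x\in M}(G_t-H_t)\,dt$. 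This gives $c(G)\leq c(H)+\int_{\S^1}\max_x(G_t-H_t)\,dt$, and symmetrizing proves the \emph{Continuity} property (3) for non-degenerate Hamiltonians; uniform continuity then lets me extend $c$ to all smooth $H$, since the non-degenerate ones are $C^0$--dense, and (3) follows in the stated generality (see \cite{schwarz, Oh, mcDuff-salamon}).

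Next I would treat the \emph{Shift} property (1): since $r$ is $x$--independent, $X_{H+r}^t=X_H^t$, so $H+r$ and $H$ share flow, periodic orbits and capping discs, while $\mathcal{A}_{H+r}=\mathcal{A}_H+\int_{\S^1}r(t)\,dt$ on every critical point; hence the filtered complexes agree after a shift of the filtration by $\int_{\S^1}r$ and $[1]_{H+r}=[1]_H$, which gives the formula. For the \emph{Triangle Inequality} (2) I would invoke the pair--of--pants product $HF_*(H)\otimes HF_*(G)\to HF_*(H\# G)$, constructed by counting solutions of the Floer equation on a pair of pants, together with its action estimate $c(\alpha\ast\beta,H\#G)\leq c(\alpha,H)+c(\beta,G)$ for the quantum--product--compatible spectral numbers; since the PSS maps intertwine the quantum product on $QH^*(M)$ with the pair--of--pants product and $1\ast 1=1$, the case $\alpha=\beta=1$ yields (2).

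For \emph{Spectrality} (4): for non-degenerate $H$, the truth value of ``$[1]_H\in\operatorname{Im}(HF_*^{(-\infty,a)}(H)\to HF_*(H))$'' is monotone in $a$ and can only change as $a$ crosses a point of $\Spec(H)$; since $(M,\omega)$ is rational, $\omega(\pi_2(M))=\Omega\Z$, so $\Spec(H)$ is closed and nowhere dense, whence the infimum defining $c(H)$ is attained and $c(H)\in\Spec(H)$ (the general case, including genuine attainment, is \cite{usher}). The index refinement uses the convention fixed above: the unit $1\in QH^*(M)$ sits in the grading of the fundamental class $[M]$, which corresponds to generators of Conley--Zehnder index $2n$ (the maxima in the $C^2$--small Morse model), so any minimal-action generator $[z,u]$ realizing $c(H)$ for non-degenerate $H$ has $\mu_\mathrm{CZ}([z,u])=2n$. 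Finally, \emph{Homotopy Invariance} (5): given normalized $H,G$ representing the same element of $\Hamtilde(M)$, pick a homotopy rel endpoints of their flows, generated by a family $\{H^s\}_{s\in[0,1]}$ which I normalize for each $s$; then each $H^s$ generates the same class in $\Hamtilde(M)$, so $\Spec(H^s)$ is independent of $s$ and nowhere dense (rationality), while $s\mapsto c(H^s)$ is continuous by (3); a continuous function valued in a fixed nowhere-dense set is constant, so $c(H)=c(G)$.

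I expect the main obstacles to be, first, the \emph{Triangle Inequality}, since it requires the full construction of the pair--of--pants product and the verification that the PSS isomorphisms carry the quantum--ring structure on $QH^*(M)$ to it; and second, \emph{Spectrality}, where showing that the infimum is genuinely attained (rather than merely an accumulation point of $\Spec(H)$) and pinning down the Conley--Zehnder index are the delicate points --- precisely the issues settled in \cite{schwarz, Oh, usher, PSS}. Once the continuation--map energy estimate underlying (3) is in hand, everything else is bookkeeping on top of the Floer and PSS packages of \cite{PSS, schwarz, Oh, mcDuff-salamon}.
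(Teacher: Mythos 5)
Your proposal is correct and is exactly the standard Floer--PSS argument: the paper itself states these properties \emph{without proof}, deferring to \cite{Oh2, Oh, schwarz, usher}, and your sketch is a faithful reconstruction of the proofs in those references (continuation-map energy estimates for continuity, the pair-of-pants product for the triangle inequality, discreteness of $\omega(\pi_2(M))$ for spectrality, and the connected-image-in-a-measure-zero-set argument for homotopy invariance). The only genuinely delicate points --- attainment of the infimum for degenerate $H$ and the index normalization --- are precisely the ones you correctly flag and defer to \cite{usher, schwarz}, so there is nothing to add.
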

The spectral pseudo-norm $\gamma$ is defined on $\widetilde{\Ham}(M,\omega)$ by the expression 
$$ \gamma(\phi^t_H) = c(H) + c( \bar{H}).$$
It induces a pseudo-distance (also denoted $\gamma$) defined by
$$ \gamma(\phi^t_H, \phi^t_K) = \gamma((\phi^t_K)^{-1}\circ\phi^t_H) = c(\bar{K} \# H) + c( \bar{H} \# K).$$ 
Note that $c$ and $\gamma$ are both bounded by the Hofer distance $\|\cdot\|$. This easily follows from a slightly different version of Property \textit{(3)}:
$$\int_{\S^1} \min_{x\in M} (H_t -G_t)  dt \leq c(H) - c(G) \leq \int_{\S^1} \max_{x\in M} (H_t -G_t)  dt.$$

It is well-known \cite{schwarz} that if $\omega|_{\pi_2(M)} = 0$, then $\gamma$ descends to a genuine distance on $\Ham(M, \omega)$.

   Finally, we end this section with the following lemma which will be used in the proof of Theorem \ref{theo base}.
  \begin{lemma} \label{CZ index lemma}
    Suppose that $H$ is a not necessarily non-degenerate Hamiltonian on a symplectic manifold $(M, \omega)$. Let $A = \{ \gamma: \exists u \mbox{ s.t. } c(H) = \mathcal{A}_H([\gamma,u]) \}$.  If all the orbits $\gamma \in A$ are non-degenerate, then there exists a capped orbit $[\gamma, u]$ such that $c(H) = \mathcal{A}_H([\gamma,u])$ and $\mu_{CZ}([\gamma, u]) =  2n$.
  \end{lemma}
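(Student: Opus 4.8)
The plan is to recover the statement from the non-degenerate case of the Spectrality property (item~(4) of Proposition~\ref{Properties of Spectral Invariants}) by a perturbation-and-limit argument: approximate $H$ by non-degenerate Hamiltonians, apply Spectrality to each of them to produce capped one-periodic orbits carrying $c$ with Conley--Zehnder index $2n$, and then pass to a limit while controlling both the action value and the index.

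Concretely, I would first fix a sequence of non-degenerate smooth Hamiltonians $H_j \to H$ in the $C^\infty$-topology. By the Continuity property, $c(H_j) \to c(H)$, and by Spectrality (recall $(M,\omega)$ is rational) there are capped one-periodic orbits $[\gamma_j,u_j] \in \Crit(\mathcal{A}_{H_j})$ with $\mathcal{A}_{H_j}([\gamma_j,u_j]) = c(H_j)$ and $\mu_{CZ}([\gamma_j,u_j]) = 2n$. The loops $\gamma_j$ satisfy $\dot\gamma_j(t) = X_{H_j}(t,\gamma_j(t))$ with uniformly bounded right-hand side, so by the Arzel\`a--Ascoli theorem a subsequence converges uniformly; passing to the limit in the (local) integral form of this ODE and bootstrapping, $\gamma_j \to \gamma$ in $C^\infty$, where $\gamma$ is a contractible one-periodic orbit of $X_H$.

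Next, for $j$ large the loop $\gamma_j$ is $C^1$-close to $\gamma$, so I can join them by a thin cylinder $v_j$ contained in a fixed tubular neighbourhood of $\gamma$, with $v_j$ converging to the constant cylinder and $\int_{v_j}\omega \to 0$. Set $u^{(j)} := u_j \# v_j$, a capping disc of $\gamma$. Using $\int_{u_j}\omega = \int_{\S^1} H_j(t,\gamma_j(t))\,dt - c(H_j)$, one gets
$$\mathcal{A}_H([\gamma,u^{(j)}]) = \int_{\S^1}\!\bigl(H(t,\gamma(t)) - H_j(t,\gamma_j(t))\bigr)\,dt + c(H_j) - \int_{v_j}\omega \;\longrightarrow\; c(H).$$
All the $u^{(j)}$ cap the \emph{fixed} loop $\gamma$, so the numbers $\mathcal{A}_H([\gamma,u^{(j)}])$ lie in a single coset of $\omega(\pi_2(M)) = \Omega\Z$, a discrete subset of $\R$; a convergent sequence in a discrete set is eventually constant, hence $\mathcal{A}_H([\gamma,u^{(j)}]) = c(H)$ for all large $j$. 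In particular $\gamma \in A$, and therefore $\gamma$ is non-degenerate by hypothesis.

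It remains to check that $\mu_{CZ}([\gamma,u^{(j)}]) = 2n$ for $j$ large, and this is the step I expect to be the main obstacle. A symplectic trivialisation of $\gamma^*TM$ that extends over $u^{(j)} = u_j\#v_j$ restricts, via $v_j$, to a trivialisation of $\gamma_j^*TM$ extending over $u_j$; with respect to such matched trivialisations the linearised-flow paths $t\mapsto d\phi_{H_j}^t|_{\gamma_j(0)}$ in $\mathrm{Sp}(2n)$ converge (as $j\to\infty$, using $H_j\to H$ in $C^2$, $\gamma_j\to\gamma$ in $C^1$, and $v_j\to\mathrm{const}$) to the path associated to $[\gamma,u^{(j)}]$, whose endpoint avoids the Maslov cycle because $\gamma$ is non-degenerate. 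Since the Conley--Zehnder index is invariant under $C^0$-small perturbations of paths whose endpoints stay off the Maslov cycle, $\mu_{CZ}([\gamma,u^{(j)}]) = \mu_{CZ}([\gamma_j,u_j]) = 2n$ for $j$ large, and $[\gamma,u^{(j)}]$ is the desired capped orbit. Everything else in the argument is soft --- Arzel\`a--Ascoli, the continuity and spectrality axioms, and discreteness of $\omega(\pi_2(M))$ --- so the careful bookkeeping of trivialisations across the cylinders $v_j$, and the resulting stability of the Conley--Zehnder index, is the only genuinely delicate point.
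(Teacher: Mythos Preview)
Your argument is correct, and the overall architecture---approximate by non-degenerate Hamiltonians, extract a convergent subsequence of orbits via Arzel\`a--Ascoli, transfer cappings across thin cylinders, and use discreteness of $\omega(\pi_2(M))$ to pin down the action---coincides with the paper's. The difference lies in how the Conley--Zehnder index is handled at the end. You take generic non-degenerate perturbations $H_j$ and then argue that $\mu_{CZ}$ is stable under the passage to the limit, using that the limit orbit $\gamma$ is non-degenerate so the endpoint of the linearised flow stays off the Maslov cycle; this works, but it is exactly the ``delicate point'' you flag, and it does require the trivialisation bookkeeping you describe. The paper sidesteps this entirely by a cleverer choice of perturbation: since each $\gamma\in A$ is non-degenerate and hence isolated, one can choose the non-degenerate approximations $H_i$ to coincide with $H$ on fixed neighbourhoods $U_\gamma$ of every $\gamma\in A$. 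Then, once the limit orbit $\gamma'$ is shown to lie in $A$, the approximating orbits $\gamma_i$ are eventually \emph{equal} to $\gamma'$ (they are $1$-periodic orbits of $H_i=H$ inside $U_{\gamma'}$, where $\gamma'$ is the only one), so $[\gamma_i,u_i]=[\gamma',u_i]$ literally has $\mu_{CZ}=2n$ with no continuity argument needed. Your route is perfectly valid and perhaps more robust in settings where one cannot freeze the perturbation near the relevant orbits; the paper's route trades that generality for a shorter and softer endgame.
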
 
  \begin{proof}
For each $\gamma\in A$, let $U_{\gamma}$ denote a neighborhood of $\gamma$ which contains no other periodic orbits of $H$; such neighborhoods exist because the orbits contained in $A$ are all isolated.  Pick a sequence of non-degenerate Hamiltonians, $\{H_i\}_i$, $C^3$--approximating $H$ such that for every $i$ and every $\gamma\in A$, $H_i|_{U_{\gamma}} = H|_{U_{\gamma}}$.  For each $i$, let $[\gamma_i, u_i]$ denote a capped orbit of  $H_i$ such that $c(H_i) = \mathcal{A}_{H_i}([\gamma_i,u_i]) \text{ and } \mu_{CZ}([\gamma_i, u_i]) =  2n.$ Such $[\gamma_i, u_i]$ exists by \emph{spectrality} of the invariant $c$ and non-degeneracy of $H_i$.  
  
By the Arzela--Ascoli theorem, a subsequence of the orbits $\gamma_i$, which we will denote by $\gamma_i$ as well, $C^1$--converges to an orbit $\gamma'$ of $H$.  Since the orbits $\gamma_i$ $C^1$--converge to $\gamma'$, one can construct a capping disc $u'$ for $\gamma'$ such that $\omega(u_i)$ converges to $\omega(u')$.  It follows that $c(H) = \mathcal{A}_H([\gamma',u'])$ and thus $\gamma' \in A$.  
  
Now, $\gamma'$ is isolated and $H_i$ coincides with $H$ on $U_{\gamma'}$.  Hence, $\gamma_i = \gamma'$ for large $i$ and thus $ \omega(u_i) = \omega( u')$ for large $i$.  It then follows that the capped orbits $[\gamma', u_i]$, for sufficiently large $i$, satisfy the conclusion of our lemma.  
  \end{proof}

\section{Proofs of the energy-capacity-type inequalities} \label{section theo base}
The main goal of this section is to prove Theorems \ref{theo base} and \ref{theo base rational}.  We will also state and prove two additional results which will be used in the proof of Theorem \ref{theo unicite complete}.
  \noindent Our arguments will use the following notion:
   \begin{definition}(See \cite[Definition 4.3]{usher})
   Let $f: M \rightarrow \mathbb{R}$ be an autonomous Hamiltonian.  A critical point $p$ of $f$ is said to be flat if the linearized flow $(\phi^t_f)_* : T_pM \rightarrow T_pM$ has no non-constant periodic orbits of period at most $1$. The function $f$ is called flat if all of its critical points are flat.
  \end{definition}
  The importance of the above notion stems from the fact that if $p$ is a non-degenerate and flat critical point of $f$, then the Morse index of $p$ coincides with the Conley--Zehnder index of $[p, u_p]$.  In Theorem 4.5 of \cite{usher}, Usher proves that a slow and autonomous Hamiltonian on a closed manifold can be $C^0$--approximated, up to any precision, by Hamiltonians which are slow, flat, and Morse. In our proof of Theorem \ref{theo base} we will need the following variant of Usher's theorem.
  \begin{theo} (Usher \cite[Theorem 4.5]{usher}) \label{usher's theorem}
  Let $H: M \rightarrow \mathbb{R}$ denote a slow Hamiltonian whose support is contained in $U$.  For any $\delta >0$ there exists a slow Hamiltonian $K:M \rightarrow \mathbb{R}$ such that $\Vert K - H \Vert_{C^0} < \delta$, the support of $K$ is contained in $U$, and  all critical points of $K$ that are contained in the interior of its support are non-degenerate and flat.
  \end{theo}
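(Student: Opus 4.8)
The plan is to deduce this statement from \cite[Theorem~4.5]{usher} in its original form --- which produces a slow, flat, Morse approximation but with \emph{no} control on the support --- and then to truncate that approximation back into $U$ by post-composing it with a carefully chosen reparametrization of the target $\R$. First I would invoke \cite[Theorem~4.5]{usher} to obtain, for a small parameter $\eps>0$ to be fixed below, a slow, flat, Morse Hamiltonian $\tilde H\colon M\to\R$ with $\Vert \tilde H-H\Vert_{C^0}<\eps$; since $M$ is closed and $\tilde H$ is Morse it has only finitely many critical points. Note that one cannot hope for $\mathrm{supp}(\tilde H)\subseteq U$: on the open set $M\setminus\mathrm{supp}(H)$ the function $H$ is constant and therefore not a limit of Morse functions supported there --- this is exactly why the statement only requires flatness and non-degeneracy on the \emph{interior} of the support.

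Next I would build $K$ as follows. Choose $\eps$ small enough that $\pm\eps$ are regular values of $\tilde H$ (and, if $\Vert H\Vert_{C^0}\geq\delta$ --- the only nontrivial case, since otherwise $K=0$ works --- small enough that $\{|\tilde H|>\eps\}$ contains a critical point of $\tilde H$), and pick $a$ with $\eps<a$ and $a$ less than every critical value of $\tilde H$ of modulus $>\eps$. Let $g\colon\R\to\R$ be smooth with $g\equiv 0$ on $[-\eps,\eps]$, with $0\leq g'\leq 1$ everywhere, with $g'>0$ on $\{\eps<|s|<a\}$ and $g'\equiv 1$ on $\{|s|\geq a\}$ (so $g(s)=s-c$ for $s\geq a$ and $g(s)=s+c$ for $s\leq -a$, with $0<c<a$), and set $K:=g\circ\tilde H$. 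Then $\{K\neq 0\}=\{|\tilde H|>\eps\}$, and since $|\tilde H|<\eps$ off $\mathrm{supp}(H)$ we get $\mathrm{supp}(K)\subseteq\{|\tilde H|\geq\eps\}\subseteq\mathrm{supp}(H)\subseteq U$; also $\Vert K-H\Vert_{C^0}\leq\Vert g\circ\tilde H-\tilde H\Vert_{C^0}+\Vert\tilde H-H\Vert_{C^0}\leq a+\eps$, which is $<\delta$ for appropriate choices of $\eps$ and $a$.

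It then remains to check that $K$ is slow and that its critical points in $\mathrm{int}(\mathrm{supp}(K))$ are non-degenerate and flat. Since $dK=(g'\circ\tilde H)\,d\tilde H$, we have $X_K=(g'\circ\tilde H)\,X_{\tilde H}$, so on each level set $\{\tilde H=c_0\}$ the flow of $K$ is the flow of $\tilde H$ reparametrized in time by the constant factor $g'(c_0)\in[0,1]$; hence a periodic orbit of $\phi^t_K$ of period $T\leq 1$ through a point $x$ gives a periodic orbit of $\phi^t_{\tilde H}$ of period $g'(\tilde H(x))\,T\leq T\leq 1$, which must be trivial because $\tilde H$ is slow, and then the original orbit is trivial as well. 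For the critical points: if $p$ is a critical point of $K$ with $|\tilde H(p)|\leq\eps$, then $p$ lies in the closure of the open set $\{|\tilde H|<\eps\}$ (using that $\pm\eps$ are regular values when $|\tilde H(p)|=\eps$), which is disjoint from $\mathrm{supp}(K)$, so $p\notin\mathrm{int}(\mathrm{supp}(K))$. Therefore any critical point $p$ of $K$ in $\mathrm{int}(\mathrm{supp}(K))$ satisfies $|\tilde H(p)|>\eps$, hence $g'(\tilde H(p))\neq 0$, so $dK|_p=0$ forces $d\tilde H|_p=0$; by the choice of $a$ we even have $|\tilde H(p)|>a$, so $g'\circ\tilde H\equiv 1$ on a neighbourhood of $p$ and $K$ coincides there with $\tilde H$ up to an additive constant. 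Consequently $K$ has the same Hessian and the same Hamiltonian flow as $\tilde H$ near $p$, so $p$ is non-degenerate and flat because $\tilde H$ is Morse and flat. The one delicate point --- and the reason the reparametrization $g$ must be taken with $0\leq g'\leq 1$ rather than with an arbitrary truncation profile --- is the preservation of slowness: a factor $g'>1$ would speed up the flow and could manufacture short periodic orbits.
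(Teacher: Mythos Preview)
Your argument is essentially correct, and in fact it supplies more than the paper does: the paper does not give a proof at all, only the remark that the statement ``can easily be extracted from the proof of Theorem~4.5 in \cite{usher}''. Your approach is genuinely different in spirit --- rather than opening up Usher's construction and tracking the support through his perturbation scheme, you use his theorem as a black box and then \emph{truncate} the resulting Morse--flat--slow approximation back into $U$ by composing with a contracting reparametrization $g$ of $\R$. This is elegant: the three properties one must preserve (support in $U$, slowness, non-degeneracy/flatness on the interior of the support) are exactly handled by the three features of $g$ (vanishing on $[-\eps,\eps]$, $0\leq g'\leq 1$, and $g'\equiv 1$ near the surviving critical values). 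The price you pay is that $K$ is no longer globally Morse --- it is constant on the complement of its support --- but this is precisely why the statement only demands non-degeneracy on $\mathrm{int}(\mathrm{supp}(K))$.

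One minor point deserves tidying: as written, the order of choices is circular, since you invoke Usher's theorem with parameter $\eps$ to produce $\tilde H$, and then ask that $\pm\eps$ be regular values of that same $\tilde H$. The fix is routine: apply Usher's theorem with some auxiliary parameter $\eps'<\delta/4$, obtain $\tilde H$, and only then choose the cut-off threshold $\eps\in(\eps',2\eps')$ so that $\pm\eps$ are regular values of $\tilde H$ (possible since $\tilde H$ has finitely many critical values). The support argument still goes through because off $\mathrm{supp}(H)$ one has $|\tilde H|<\eps'<\eps$, and the $C^0$--estimate $\Vert K-H\Vert\leq a+\eps'$ is then made $<\delta$ by taking $a$ close to $\eps$. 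With this adjustment the proof is complete.
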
 
  We will not prove the above theorem as it can easily be extracted from the proof of Theorem 4.5 in \cite{usher}.  Theorems \ref{theo base} and \ref{theo base rational} are similar in nature and their proofs have significant overlaps.  Hence, we will provide a single argument proving both theorems at once. 
\begin{proof}[Proofs of Theorems \ref{theo base} and \ref{theo base rational}]
Observe that by the shift property of spectral invariants we may assume, without loss of generality, that $C=0$.  
 
 For any $\delta > 0$ pick a time independent Hamiltonian $f\in C^{\infty}_c(U)$ such that $f$ is slow, $0 \leq f$, and $\chz(U) - \delta \leq \max(f)$.  Since $f$ is slow we have $c(f) = \max(f)$ and $c(-f) = 0$; for a proof of this fact see Proposition 4.1 of \cite{usher}.  Note that the conventions used in \cite{usher} are different from ours. By Theorem \ref{usher's theorem}, we may assume that the critical points of $f$ that are contained in the interior of the support of $f$ are non-degenerate and flat.  
  Consider the Hamiltonian $H_s = H+sf$.  Its 1--periodic orbits consist of 1--periodic orbits of the flow of $H$ together with the critical points of $f$.  Hence,
  $$ \Spec(H_s)= \Spec(H) \cup \{ s f(p) - \omega(\Sigma) : p \in \Crit(f), \; \Sigma \in \pi_2(M) \},$$ where $\Crit(f)$ denotes the set of critical points of $f$.  Similarly, define $\bar{H}_s = \bar{H} + sf$.  We have:
 $$ \Spec(\bar{H}_s)= \Spec(\bar{H}) \cup \{  s f(p) - \omega(\Sigma): p \in \Crit(f),  \; \Sigma \in \pi_2(M)  \}.$$
  By the spectrality property we know that $c(H_s) \in \Spec(H_s)$ and $c(\bar{H}_s) \in \Spec(\bar{H}_s)$.  However, suppose that one of the following two situations holds:
  \begin{align} 
&  c(H_s) \in \Spec(H) \text{ for all }s\in [0,1],   \label{super spectrality 1} \\
 \mbox{or}\quad &  c(\bar{H}_s) \in \Spec(\bar{H})\text{ for all }s\in [0,1].   \label{super spectrality 2}
  \end{align}
 If (\ref{super spectrality 1}) holds, then it follows, from the continuity property of spectral invariants, that $c(H) = c(H_1) = c(H+f)$.  Using the triangle inequality we obtain $c(f) \leq c(\bar{H}) + c(H + f).$ Combining these with the fact that $c(f) = \max(f)$, we get $$ \chz(U) - \delta \leq  \max(f) \leq c(\bar{H}) + c(H).$$  We arrive at the same conclusion if (\ref{super spectrality 2}) holds.
 
 We will next show that if the first possibility, in either of Theorems \ref{theo base} and \ref{theo base rational}, does not hold, then the second one must hold.   Therefore, \textbf{for the rest of the proof}, we will suppose that $\chz(U) > c(\bar{H}) + c(H)$.  This implies that there exist $\delta$ and $f$ as in the first paragraph of this proof such that (\ref{super spectrality 1}) and (\ref{super spectrality 2}) do not hold. Let $s_0 = \inf \{ s \in [0,1]: c(H_s) \notin \Spec(H) \}.$  Note that this means $c(H) = c(H_{s_0})$. Pick a sequence of numbers $s_i \in \{ s \in [0,1]: c(H_s) \notin \Spec(H) \}$ such that $s_i \to s_0$. There exist critical points $p_i$ of $f$ contained in the interior of the support of $f$ such that $c(H_{s_i}) = s_i f(p_i) - \omega(\Sigma_i)$, where $\Sigma_i \in \pi_2(M)$.  By passing to a subsequence, we may assume that $p_i \to p,$ where $p$ is a critical point of $f$; note that $p$ is not necessarily contained in the interior of the support of $f$. Now, the sequence $\omega(\Sigma_i)$ must converge because both  $c(H_{s_i})$ and $s_i f(p_i)$ converge.  Since $\omega(\pi_2(M))$ is discrete we conclude that $\omega(\Sigma_i) = \omega(\Sigma)$ for large $i$.  It then follows that $c(H_{s_i}) = s_i f(p_i) -\omega(\Sigma)$ for large $i$, and 
 \begin{equation}\label{eq: c(H)}
  c(H) = c(H_{s_0})= s_0 f(p) - \omega(\Sigma) .
 \end{equation}
 Similarly, let $r_0 = \inf \{ r \in [0,1]: c(\bar{H}_s) \notin \Spec(\bar{H}_s) \}$. 
  Repeating the same argument as above we find a capped orbit $[q, \Sigma']$ such that   
  \begin{equation}\label{eq: c(bar_H)}
  c(\bar{H}) = c(\bar{H}_{r_0}) = r_0 f(q) - \omega(\Sigma').
  \end{equation}
  We will prove Theorems \ref{theo base} and \ref{theo base rational} by carefully analyzing the numbers $\omega(\Sigma)$ and $\omega(\Sigma').$
  
\noindent \textbf{Proof of Theorem \ref{theo base rational}:}
 Since $(M, \omega)$ is rational, there exist integers $k_1$ and $k_2$ such that $\omega(\Sigma) = k_1 \Omega$ and $\omega(\Sigma') = k_2 \Omega$.  From Equations (\ref{eq: c(H)}) and (\ref{eq: c(bar_H)}) we get that $c(H)= s_0 f(p) - k_1 \Omega$ and $c(\bar{H}) =  r_0 f(q) - k_2 \Omega$ so that point (2) of Theorem \ref{theo base rational} holds with $k=-k_1$ and $\bar{k}=k_2$ since $0 \leq s_0 f(p), \; r_0 f(q) \leq \chz(U)$. 

Moreover, we have the following chain of inequalities:
 $$0 \leq c(H) + c(\bar{H}) = s_0 f(p) + r_0 f(q) + (k -\bar{k}) \Omega \leq \chz(U)$$
which implies, if $\chz(U) < \frac{1}{2} \Omega$, that $$-\Omega < -2 c_\mathrm{HZ}(U)\leq-s_0 f(p) - r_0 f(q)\leq (k -\bar{k}) \Omega \leq \chz(U)<\frac12\Omega$$
which can be satisfied only if $k=\bar{k}$.
 
\noindent \textbf{Proof of Theorem \ref{theo base}:} We now assume $(M,\omega)$ to be monotone.
We can apply Lemma \ref{CZ index lemma} to $H_{s_i}$ and assume that $\mu_\mathrm{CZ}([p_i, \Sigma_i]) = 2n.$  On the other hand, $H_{s_i}$ coincides with $s_i f$ on a neighborhood of $p_i$ and thus $$\mu_\mathrm{CZ}([p_i, \Sigma_i]) = i_{\text{Morse}}(p_i) - 2 c_1(\Sigma_i),$$
  where $i_{\text{Morse}}(p_i)$ is the Morse index of $p_i$ with respect to $f$.  Here, we have used the assumption that the critical points $p_i$ of $f$ are flat and non-degenerate, and hence $i_{\text{Morse}}(p_i) = \mu_\mathrm{CZ}([p_i, u_{p_i}])$.  Because $i_{\text{Morse}}(p_i) \leq 2n $ we conclude that $c_1(\Sigma_i)\leq 0.$  Recall that for large $i$, $\omega(\Sigma_i) = \omega(\Sigma)$, and thus, by monotonicity,  $c_1(\Sigma_i) =c_1(\Sigma).$  Therefore,  $$c_1(\Sigma)\leq 0.$$
Similarly, we have $$c_1(\Sigma')\leq 0.$$

  Recall that $\omega = \lambda c_1$ on $\pi_2(M)$.  First, suppose that $ \lambda > 0$.  Because $c_1(\Sigma), c_1(\Sigma')\leq 0$ we get that $$c(H) =  s_0 f(p) -\lambda c_1(\Sigma) \geq  s_0 f(p) \text{ and, }$$   $$ c(\bar{H}) =  r_0 f(q) - \lambda c_1(\Sigma') \geq r_0f(q).$$  Combining these inequalities with the assumption that $c(H)  + c(\bar{H}) < \chz(U)$ we conclude that 
  $$0 \leq c(H)  \leq  \chz(U) \; \text{ and } \; 0 \leq c(\bar{H})  \leq  \chz(U).$$  
 This proves Theorem \ref{theo base} for positively monotone symplectic manifolds.  Next, suppose that $\lambda \leq 0$ and repeat the same argument as in the previous paragraph to get that $c(H) \leq   s_0 f(p)$ and $c(\bar{H}) \leq r_0 f(q)$.  Thus, $c(H), \; c(\bar{H})  \leq \chz(U)$.  Combining this with the fact that $c(H)  + c(\bar{H}) \geq 0 $ we obtain
 $$|c(H)|  \leq \chz(U) \; \text{ and } \;| c(\bar{H}) | \leq \chz(U)$$
which concludes the proof of Theorem \ref{theo base}.
\end{proof}

We now focus on the rational case (proofs in the particular case of monotone manifolds are quite similar only slightly easier). Theorem \ref{theo base rational} has the following straightforward corollary.

\begin{corol}\label{corol rational 2 open sets} Let $U_-$ and $U_+$ denote non-empty open subsets of $(M,\omega)$, and $C_-$ and $C_+$  real numbers such that $\frac14\Omega>C_\pm > \chz(U_\pm)$. If a Hamiltonian $H$ satisfies $H|_{U_\pm}=\pm C_\pm$,  then $\gamma(\tilde{\phi}_H^{t})$ is greater than or equal to at least one of $\chz(U_-)$ and $\chz(U_+)$. 
 \end{corol}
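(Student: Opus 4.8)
The plan is to apply Theorem~\ref{theo base rational} twice, once with data $(U,C)=(U_-,-C_-)$ and once with $(U,C)=(U_+,C_+)$; both applications are legitimate since $H$ equals $-C_-$ on all of $[0,1]\times U_-$ and equals $C_+$ on all of $[0,1]\times U_+$. For the open set $U_{\pm}$, the first alternative in Theorem~\ref{theo base rational} is exactly the inequality $\gamma(\tilde\phi_H^1)\geq\chz(U_\pm)$, so the corollary will follow as soon as I rule out that the \emph{second} alternative holds in both applications simultaneously. Note first that the hypotheses force $\Omega>0$: the zero function is slow and compactly supported, so $\chz(U_\pm)\geq 0$, and then $\tfrac14\Omega>C_\pm>\chz(U_\pm)\geq 0$ is impossible unless $\Omega>0$.

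Suppose, for contradiction, that the second alternative of Theorem~\ref{theo base rational} holds in both applications. From $(U_-,-C_-)$ I obtain an integer $a$ with
$$0\leq c(H)+C_- - a\Omega\leq\chz(U_-),$$
and from $(U_+,C_+)$ an integer $b$ with
$$0\leq c(H)-C_+ - b\Omega\leq\chz(U_+)$$
(the inequalities for $c(\bar H)$ are not needed). Comparing the lower bound on $c(H)$ given by one of these lines with the upper bound given by the other yields
$$C_-+C_+-\chz(U_-)\ \leq\ (a-b)\,\Omega\ \leq\ C_-+C_++\chz(U_+).$$
Since $\chz(U_-)<C_-$ and $\chz(U_+)\geq 0$, the left-hand side is at least $C_+>0$, so $(a-b)\Omega>0$ and hence $a-b\geq 1$. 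Since $\chz(U_+)<C_+$ and $C_-,C_+<\tfrac14\Omega$, the right-hand side is strictly less than $C_-+2C_+$, which is in turn strictly less than $\tfrac34\Omega$; hence $(a-b)\Omega<\tfrac34\Omega$ and $a-b\leq 0$. These two conclusions are incompatible.

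Consequently at least one of the two invocations of Theorem~\ref{theo base rational} must land in its first alternative, which is precisely the statement that $\gamma(\tilde\phi_H^1)$ is greater than or equal to $\chz(U_-)$ or to $\chz(U_+)$. I do not expect any real obstacle here: the argument is essentially a pigeonhole estimate on the integer $a-b$ once the two applications of Theorem~\ref{theo base rational} are set up. The only points needing attention are the sign bookkeeping when feeding the negative constant $-C_-$ into that theorem, and the observation that the quantitative hypothesis $C_\pm<\tfrac14\Omega$ is exactly what confines $(a-b)\,\Omega$ to the open interval $(0,\Omega)$ and thereby produces the contradiction — a bound as weak as $C_\pm<\tfrac12\Omega$ would not suffice.
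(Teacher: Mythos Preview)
Your argument is correct and follows the same strategy as the paper: apply Theorem~\ref{theo base rational} on $U_-$ and on $U_+$, and show that the two ``second alternatives'' force incompatible integer constraints. The only difference is cosmetic: the paper pairs the $c(H)$ estimate from one application with the $c(\bar H)$ estimate from the other to bound $\gamma=c(H)+c(\bar H)$ directly (obtaining $\gamma\geq\tfrac12\Omega$ in that case), whereas you compare the two $c(H)$ estimates alone and reach a contradiction without ever invoking $c(\bar H)$ --- a slightly leaner route to the same conclusion.
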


 \begin{proof}
 Let $H$ be as above. Apply Theorem \ref{theo base rational} to $H$ on both $U_-$ and $U_+$ and get two integers $k$, $l$ such that
  \begin{align*}
    \gamma(\tilde{\phi}_H^t) \geq \chz(U_+) \quad &\mbox{or} \quad \left\{ 
      \begin{array}{l}
        C_+ +  k \Omega \leq c(H) \leq C_+ + k\Omega+ \chz(U_+) \\
        -C_+ - k\Omega \leq c(\bar{H}) \leq -C_+ -k\Omega+ \chz(U_+)
      \end{array}
\right.\\
    \gamma(\tilde{\phi}_H^t) \geq \chz(U_-) \quad &\mbox{or} \quad \left\{ 
      \begin{array}{l}
        -C_-+l\Omega \leq c(H) \leq -C_-+l\Omega+ \chz(U_-) \\
         C_--l\Omega \leq c(\bar{H}) \leq C_--l\Omega+ \chz(U_-)
      \end{array}
\right.
  \end{align*}
  
  Thus, either we directly get $\gamma(\tilde{\phi}_H^t) \geq \chz(U_\bullet)$ for $\bullet$ being either $+$ or $-$, or we have:
\begin{align*}
 &  c(H) + c(\bar{H}) \geq - C_-   - C_+  + (l-k)\Omega,\qquad \mbox{and} \\ 
 & c(H) + c(\bar{H})\leq - C_-   - C_+  + (l-k)\Omega + \chz(U_-) + \chz(U_+). 
\end{align*}
Since $0 \leq c(H) + c(\bar{H})$ the second inequality forces $l-k$ to be positive. Then from the first inequality we obtain: 
\begin{align*}
\gamma(\tilde{\phi}_H^t) = c(H) + c(\bar{H}) \geq\frac12 \Omega  \geq \chz(U_-) + \chz(U_+)
\end{align*}
which concludes the proof.
 \end{proof}

Now, using cut-off functions and this corollary, we can prove the following lemma which will be the main ingredient of the proof of Theorem \ref{theo unicite complete}.

\begin{lemma}\label{lemma for uniqueness theo}
  Let $F$ and $G$ be Hamiltonians. Let $U_\pm$ be non-empty, disjoint, open subsets such that $\chz(U_-) = \chz(U_+)$ (we denote this common value by $\chz(U)$) and 
  \begin{enumerate}
    \item $\chz(U) < \inf_{U_+} (F)-\sup_{U_+} (G)<\frac14\Omega$, and symmetrically\\ $-\frac14\Omega<\sup_{U_-} (F)-\inf_{U_-}(G) < -\chz(U)$, 
    \item $\osc_{U_\pm} (F) + \osc_{U_\pm} (G) < \frac{1}{3}  \chz(U)$.
 \end{enumerate}
 Then $\gamma(\tilde{\phi}_F^t,\tilde{\phi}_G^t) \geq \frac{1}{3} \chz(U)$.
\end{lemma}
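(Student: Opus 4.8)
The idea is to reduce Lemma~\ref{lemma for uniqueness theo} to Corollary~\ref{corol rational 2 open sets} by building, from $F$ and $G$, an auxiliary Hamiltonian which is \emph{locally constant} on two small open sets on which Corollary~\ref{corol rational 2 open sets} applies, while controlling the error in the $\gamma$-distance by the oscillation hypotheses (2). Concretely, set $\Phi := \bar{G}\#F$, the normalized (in the sense of generating $(\phi_G^t)^{-1}\phi_F^t$) Hamiltonian whose time-$t$ spectral distance recovers $\gamma(\tilde\phi_F^t,\tilde\phi_G^t)$; recall $\gamma(\tilde\phi_F^t,\tilde\phi_G^t) = c(\bar{G}\#F)+c(\bar{F}\#G)$. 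On $U_+$ the function $F-G$ is (by (1)) pinned into an interval of length $\osc_{U_+}(F)+\osc_{U_+}(G)$ sitting above $\chz(U)$, and likewise $F-G$ on $U_-$ sits below $-\chz(U)$; so morally $\Phi$ is "almost" locally constant $\approx +C_+$ on $U_+$ and $\approx -C_-$ on $U_-$ with $C_\pm$ of the right size. The obstruction is that $\Phi = \bar G\#F$ is not literally $F-G$ on $U_\pm$ (it involves the flow of $G$), and it is not literally constant. Both defects must be absorbed.

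The technical execution I would carry out in the following order. First, using cut-off functions, replace $F$ by $\tilde F$ which agrees with $F$ on slightly shrunk open sets $U_\pm'\Subset U_\pm$ but is modified outside so that $\tilde F$ is \emph{constant}, equal to some $c_\pm$ with $\inf_{U_+}F \le c_+ \le \sup_{U_+}F$ (resp.\ for $U_-$), on an open neighborhood of $\overline{U_\pm'}$; do the analogous thing to $G$, producing $\tilde G$ constant $=d_\pm$ near $\overline{U_\pm'}$. One can arrange $\chz(U_\pm')$ to be as close as desired to $\chz(U_\pm)=\chz(U)$, say $\ge \chz(U)-\eta$. Second, observe that near $\overline{U_+'}$ the Hamiltonian $\bar{\tilde G}\#\tilde F$ is genuinely constant: since $\tilde G$ is constant there, its flow is the identity near those points for small time, so $\bar{\tilde G}\#\tilde F = \tilde F - \tilde G \equiv c_+ - d_+$ on $U_+'$ (and $= c_- - d_-$ on $U_-'$); this is exactly where a constant Hamiltonian generating the identity flow locally is used, and it is the crux making Corollary~\ref{corol rational 2 open sets} applicable. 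By (1), $C_+:=c_+-d_+ \in (\chz(U), \frac14\Omega)$ and $C_-:=-(c_--d_-)\in(\chz(U),\frac14\Omega)$, possibly after shrinking so that $\chz(U_\pm') > \chz(U)-\eta$ still keeps $C_\pm > \chz(U_\pm')$. Third, apply Corollary~\ref{corol rational 2 open sets} with these $U_\pm'$, $C_\pm$ to $\bar{\tilde G}\#\tilde F$ (whose flow is the lift we want) to conclude $\gamma(\widetilde{(\phi_{\tilde G}^t)^{-1}\phi_{\tilde F}^t}) \ge \chz(U_+')$ or $\ge \chz(U_-')$, in either case $\ge \chz(U)-\eta$.

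Finally I would transfer this bound back to $F,G$ via the continuity/triangle estimates for $\gamma$: $|\gamma(\tilde\phi_F^t,\tilde\phi_G^t) - \gamma(\tilde\phi_{\tilde F}^t,\tilde\phi_{\tilde G}^t)|$ is bounded by the Hofer distances $\|\tilde\phi_F^t(\tilde\phi_{\tilde F}^t)^{-1}\|$ and $\|\tilde\phi_G^t(\tilde\phi_{\tilde G}^t)^{-1}\|$, which in turn are bounded by $\int_0^1\osc(F_s-\tilde F_s)\,ds$ and similarly for $G$; by the cut-off construction these oscillations can be kept below $\osc_{U_\pm}(F)$ and $\osc_{U_\pm}(G)$ up to an arbitrarily small additive slack — more precisely, the modification only changes the functions on $U_\pm\setminus U_\pm'$ where $F$ and $G$ already vary by at most their oscillations over $U_\pm$, plus one can take the support of the modification inside $U_\pm$ so nothing else moves. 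Hence $\gamma(\tilde\phi_F^t,\tilde\phi_G^t) \ge \chz(U)-\eta - (\osc_{U_+}F+\osc_{U_+}G) - (\osc_{U_-}F+\osc_{U_-}G) - \eta'$. Using (2), each of the two oscillation sums is $< \tfrac13\chz(U)$, which gives roughly $\gamma \ge \chz(U) - \tfrac23\chz(U) - (\text{small}) $; choosing $\eta,\eta'$ small yields $\gamma(\tilde\phi_F^t,\tilde\phi_G^t) \ge \tfrac13\chz(U)$, as claimed. The main obstacle, and the step to be careful about, is the second one: making precise that after a cut-off $\bar{\tilde G}\#\tilde F$ really is locally constant of the right value on suitable open sets, simultaneously controlling $\chz$ of the shrunken sets, the sizes $C_\pm$ against hypothesis (1) with its $\tfrac14\Omega$ margin, and the Hofer error against hypothesis (2) — all the $\varepsilon$-bookkeeping must close with precisely the constants $\tfrac14\Omega$ and $\tfrac13\chz(U)$ in the statement.
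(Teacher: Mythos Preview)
Your approach is essentially the paper's: cut off $F$ and $G$ to make them locally constant, observe that $\bar{\tilde G}\#\tilde F$ is then genuinely constant on the open sets (because the flow of a constant Hamiltonian is the identity there), invoke Corollary~\ref{corol rational 2 open sets}, and absorb the cut-off error via the oscillation hypothesis~(2). The only cosmetic difference is that the paper \emph{enlarges} $U_\pm$ to $V_\pm\supset\overline{U_\pm}$ (controlling $\osc_{V_\pm}$ near $\osc_{U_\pm}$) and makes $f,g$ constant on $U_\pm$ itself, whereas you \emph{shrink} to $U_\pm'\Subset U_\pm$ (controlling $\chz(U_\pm')$ near $\chz(U)$) and make $\tilde F,\tilde G$ constant on $U_\pm'$; both schemes close with the same constant $\tfrac13\chz(U)$.

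Two small points to clean up. First, your description of the cut-off is self-contradictory as written: you say ``$\tilde F$ agrees with $F$ on $U_\pm'$'' and also ``$\tilde F$ is constant on a neighborhood of $\overline{U_\pm'}$''; what you use later (and what you clearly intend) is that $\tilde F$ is \emph{constant} on $U_\pm'$ and agrees with $F$ \emph{outside} $U_\pm$, with the interpolation supported in $U_\pm\setminus U_\pm'$. Second, to ensure $C_+:=c_+-d_+\in(\chz(U),\tfrac14\Omega)$ you cannot take arbitrary $c_+\in[\inf_{U_+}F,\sup_{U_+}F]$ and $d_+\in[\inf_{U_+}G,\sup_{U_+}G]$; the specific choices $c_+=\inf_{U_+}F$, $d_+=\sup_{U_+}G$ (and $c_-=\sup_{U_-}F$, $d_-=\inf_{U_-}G$), exactly as in the paper, are what make hypothesis~(1) apply directly.
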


\begin{proof}
  Fix $\varepsilon >0$. We choose disjoint open subsets $V_\pm$ such that $\overline{U_\pm}\subset V_\pm$ and $\osc_{V_\pm}(F) < \osc_{U_\pm}(F) + \varepsilon$ and $\osc_{V_\pm}(G) < \osc_{U_\pm}(G) + \varepsilon$. We also choose cut-off functions $\rho_\pm$ with support in $V_\pm$, such that $0\leq \rho_\pm\leq 1$ and $\rho_\pm|_{U_\pm}=1$.

We define intermediate functions, $f$ and $g$, by
\begin{align*}
  f = F - \rho_+(F-a_+) - \rho_-(F-a_-) \quad \mbox{with } a_+=\inf_{U_+}(F)  \mbox{ and } a_-=\sup_{U_-}(F),\\
  g = G - \rho_+(G-b_+) - \rho_-(G-b_-) \quad \mbox{with } b_+=\sup_{U_+}(G)  \mbox{ and } b_-=\inf_{U_-}(G).
\end{align*}
By triangle inequality, we get
\begin{align}
  \label{eq:TriangleIneqCoro8Compact}
  \gamma(\tilde{\phi}_F^t,\tilde{\phi}_G^t) \geq   \gamma(\tilde{\phi}_f^t,\tilde{\phi}_g^t) -  \gamma(\tilde{\phi}_f^t,\tilde{\phi}_F^t) -  \gamma(\tilde{\phi}_G^t,\tilde{\phi}_g^t)
\end{align}
and we now bound the quantities appearing on the right-hand side.

\noindent\textbf{Bounding $\gamma(\tilde{\phi}_f^t,\tilde{\phi}_g^t)$.} 
Define $\varphi =\bar{g}\#f$, that is,
\begin{align*}
  \varphi(t,x) = -g(t,{\phi}_g^t(x)) + f(t,{\phi}_g^t(x))
\end{align*}
which generates $({\phi}_g^t)^{-1}\circ {\phi}_f^1$. Notice that $\varphi$ is constant on both open sets $U_\pm$: $\varphi|_{U_\pm} = a_\pm - b_\pm$ and that, by assumption, 
\begin{align*}
  \frac14\Omega>C_+ &= a_+ - b_+ =\inf_{U_+}(F)-\sup_{U_+}(G) > \chz(U),  \\
  \frac14\Omega>C_- &= -(a_- - b_-) =-(\sup_{U_-}(F)-\inf_{U_-}(G)) > \chz(U)  .
\end{align*}
Thus, by applying Corollary \ref{corol rational 2 open sets} to $\varphi$ we get: $\gamma(\tilde{\phi}_f^t,\tilde{\phi}_g^t)=\gamma(\tilde{\phi}^t_\varphi)\geq \chz(U)$.

\noindent\textbf{Bounding $\gamma(\tilde{\phi}_f^t,\tilde{\phi}_F^t)$ and $\gamma(\tilde{\phi}_G^t,\tilde{\phi}_g^t)$.} By general property of $\gamma$, and definition of $f$
\begin{align*}
  \gamma(\tilde{\phi}_f^t,\tilde{\phi}_F^t) &\leq \osc_{M}(F-f) = \osc_M (\rho_+(F-a_+) + \rho_-(F-a_-)) \\
&\leq \osc_{V_+} \big(\rho_+(F-a_+)\big) + \osc_{V_-} \big(\rho_-(F-a_-)\big)  \\
&\leq \osc_{V_+} (F) + \osc_{V_-} (F) \leq \osc_{U_+} (F) + \osc_{U_-} (F) + 2\varepsilon
\end{align*}
For the same reasons, we also have $\gamma(\tilde{\phi}_g^t,\tilde{\phi}_G^t) \leq \osc_{U_+} (G) + \osc_{U_-} (G) + 2\varepsilon$ and \eqref{eq:TriangleIneqCoro8Compact} leads to 
\begin{align*}
  \gamma(\tilde{\phi}_F^t,\tilde{\phi}_G^t) &\geq  \chz(U) - ( \osc_{U_+} \!(F)+ \osc_{U_+} \!(G)) - (\osc_{U_-} \!(F)+\osc_{U_-} \!(G)) - 4 \varepsilon \\
&\geq \frac{1}{3}  \chz(U) - 4 \varepsilon
\end{align*}
for any $\varepsilon >0$. This concludes the proof.
\end{proof}

\section{Uniqueness of generators}\label{section preuve unicite} 

In this section, we prove Theorems \ref{theo unicite complete} and \ref{theo unicite hameo}.

\begin{proof}[Proof of Theorem \ref{theo unicite complete}]
  Assume that the conclusion of the theorem is false; i.e. $H-H'$ is a function of time and space variables on $I \times U$. Then there exist $t_0$ and, up to a shift of (say) $H'$ by a constant, $x_+\neq x_- \in U$ such that 
$$\Delta=H(t_0,x_+)-H'(t_0,x_+) = H'(t_0,x_-) - H(t_0,x_-) > 0.$$

First, notice that there exist $\delta_0\in]0,1]$ and $r_0>0$ such that for any $\delta\leq \delta_0$, $J=[t_0,t_0+\delta]\subset I$ and for any $r\leq r_0$, the balls $B_\pm=B_{r}(x_\pm)$ are disjoint, included in $U$, and
  \begin{align}\label{eq:IntervalForSigma}
    \left( \frac{5}{4} \frac{1}{M_+}, \frac{4}{3} \frac{1}{M_+} \right) \cap \left( \frac{5}{4} \frac{1}{M_-}, \frac{4}{3} \frac{1}{M_-} \right) \neq \emptyset
  \end{align}
with $M_+= \sup_{J\times B_+} (H)-\inf_{J\times B_+} (H')$ and $M_-= \sup_{J\times B_-} (H')-\inf_{J\times B_-} (H)$. (Even though $J$, $B_\pm$, and $M_\pm$ depend on $\delta$ and/or $r$, we omit them from the notation for readability.) Indeed, let $\eta=\frac{\Delta}{32}$ and choose $\delta_0$ and $r_0$ small enough such that
\begin{align*}
  \left\{ \begin{array}{ll}
    \sup_{J\times B_+}(H) \leq H(t_0,x_+) + \eta \quad &\mbox{and} \quad   \inf_{J\times B_+}(H') \geq H'(t_0,x_+) - \eta \\
    \inf_{J\times B_-}(H) \geq H(t_0,x_-) - \eta \quad &\mbox{and} \quad   \sup_{J\times B_-}(H') \leq H'(t_0,x_-) + \eta  
  \end{array}  \right.
\end{align*}
Then $\Delta \leq M_\pm \leq \Delta + 2 \eta$, so that $|M_+-M_-|\leq 2\eta = \frac{\Delta}{16}$ which in turn ensures that \eqref{eq:IntervalForSigma} holds. Next, notice that we can also assume $\delta_0$ and $r_0$ are small enough so that, for any $\delta\leq \delta_0$ and $r\leq r_0$ 
\begin{align} \label{eq:technical lower bounds}
\inf_{J\times B_+} (H)-\sup_{J\times B_+} (H')> \frac{4}{5} M_+    \mbox{ and }   \inf_{J\times B_-} (H')-\sup_{J\times B_-} (H)> \frac{4}{5} M_- 
\end{align}
(since these inequalities obviously hold for $\delta=0$ and $r=0$ and $H$ and $H'$ are continuous). We choose such a $\delta$. 

Recall that $\chz(B_+)=\chz(B_-)=\pi r^2$ (which we denote $\chz(B)$) so that we can choose $r$ small enough such that $\chz(B)<\delta \frac{3}{4} \Delta$. This in particular implies that $\chz(B)<\delta \frac{3}{4} M_\pm$. Finally, we choose $r$ small enough so that
\begin{align} \label{eq:rational condition}
\chz(B) < \frac{3}{16}\Omega \;.
\end{align}
Now that $r$ and $\delta$ are fixed, we choose $\sigma$ such that 
  \begin{align*}
   \frac{\delta\sigma}{\chz(B)} \in \left( \frac{5}{4} \frac{1}{M_+}, \frac{4}{3} \frac{1}{M_+} \right) \cap \left( \frac{5}{4} \frac{1}{M_-}, \frac{4}{3} \frac{1}{M_-} \right).
  \end{align*}
Notice that, by definition, $\sigma<\frac{4}{3} \frac{1}{M_+}\frac{\chz(B)}{\delta} \leq1$. This implies that for all $t\in[0,1]$, $t_0+\sigma\delta t\in J$ and we define $L_k$, $L'_k$, $L$ and $L'$ by: $L_\bullet^\star (t,x)=\sigma\delta H_\bullet^\star (t_0+\sigma\delta t,x)$ (with $\bullet$ being either nothing or an integer and $\star$ being either nothing or $'$).  

In view of the constants we chose, we get that 
\begin{align*}
  \inf_{[0,1]\times B_+}(L) &\geq \delta\sigma \inf_{J\times B_+}(H) \geq  \delta\sigma\left(\frac{4}{5} M_++ \sup_{J\times B_+} (H')\right) & \mbox{by \eqref{eq:technical lower bounds}}\\
&\geq   \delta\sigma \frac{4}{5} M_+ +\sup_{[0,1]\times B_+} (L')
\end{align*}
so that, by definition of $\sigma$,
\begin{align}\label{eq:infLminussupL'}
  \inf_{[0,1]\times B_+}(L)-\sup_{[0,1]\times B_+} (L')  \geq \delta\sigma \frac{4}{5} M_+  > \chz(B)
\end{align}
We also get:
\begin{align*}
  \sup_{[0,1]\times B_+}(L) &\leq \delta\sigma \sup_{J\times B_+}(H) =  \delta\sigma\left(M_++ \inf_{J\times B_+} (H')\right) \\
&\leq   \delta\sigma M_+ +\inf_{[0,1]\times B_+} (L')
\end{align*}
so that 
\begin{align*}
 \sup_{[0,1]\times B_+}(L)-\inf_{[0,1]\times B_+} (L')<\frac{4}{3} \chz(B) 
\end{align*}
which (together with \eqref{eq:infLminussupL'}) leads to
\begin{align*}
  \osc_{[0,1]\times B_+}(L)+\osc_{[0,1]\times B_+} (L') < \frac{1}{3} \chz(B) \;.
\end{align*}
Since the quantity $\osc_{[0,1]\times B_+}(L)+\osc_{[0,1]\times B_+} (L')$ is non-negative, the bound on the capacity of $B$ in terms of the rationality constant $\Omega$,  \eqref{eq:rational condition}, finally ensures that:
\begin{align*}
  \inf_{[0,1]\times B_+}(L)-\sup_{[0,1]\times B_+} (L')  \leq \sup_{[0,1]\times B_+} (L)  - \inf_{[0,1]\times B_+}(L')<\frac{4}{3} \chz(B) < \frac14 \Omega \;.
\end{align*}
By collecting all the above results, we get
\begin{align*}
& \chz(B)  < \inf_{[0,1]\times B+}(L)-\sup_{[0,1]\times B_+} (L')< \frac14 \Omega\;,  \quad \mbox{and} \\
& \osc_{[0,1]\times B_+}(L)+\osc_{[0,1]\times B_+} (L') < \frac{1}{3} \chz(B) \;
\end{align*}
and since $L_k$ and $L'_k$ converge uniformly on $U$ to $L$ and $L'$ respectively, they also satisfy all these inequalities as soon as $k$ is large enough.

Now, by considering the situation on $B_-$, we obtain the (symmetric) properties required in order to apply Lemma \ref{lemma for uniqueness theo} which allows us to conclude that $\gamma(\tilde{\phi}^t_{L_k},\tilde{\phi}^t_{L'_k})$ is bounded from below by $\frac{1}{3} \chz(B)$ for $k$ big enough.

However, by the definition of $L_k$, for all $t\in [0,1]$, $\phi_{L_k}^t=\phi_{H_k}^{t_0+\delta\sigma t}(\phi_{H_k}^{t_0})^{-1}$ so that $\tilde\phi_{L_k}^t=\tilde\phi_{H_k}^{t_0+\delta\sigma t} (\tilde\phi_{H_k}^{t_0})^{-1}$. Similarly, we have $\tilde\phi_{L'_k}^t=\tilde\phi_{H'_k}^{t_0+\delta\sigma t} (\tilde\phi_{H'_k}^{t_0})^{-1}$ and thus assumption \textit{(i)} ensures that $\gamma(\tilde{\phi}^t_{L_k},\tilde{\phi}^t_{L'_k})$ does go to 0 when $k$ goes to infinity and we get a contradiction. 
\end{proof}

As promised in the introduction, we will explain how one can recover Theorem \ref{theo unicite hameo} from Theorem \ref{theo unicite complete}.  But, before doing so, we make a short digression to discuss an important property of $C^0$--convergence.
 
\begin{remark}\label{rem: C0_dist}
An important feature of $C^0$--convergence, which will be used below, is that if a sequence of homeomorphisms, $\phi_i,$ $C^0$--converges to a \emph{homeomorphism} $\phi$, then the sequence of inverses, $\phi_i^{-1},$ $C^0$--converges to $\phi^{-1}$.  We will sketch a proof of this fact below.

First, note that $d_{C^0}(\phi_i, \phi) \to 0$ implies that $d_{C^0}(\psi \phi_i, \psi \phi) \to 0$ for any uniformly continuous map $\psi$.  
Taking $\psi = \phi^{-1}$, we get that $d_{C^0}(\phi^{-1} \phi_i, Id) \to 0.$  Next, observe that $d_{C^0}$ is right-invariant and so we get that $$d_{C^0}( \phi^{-1} \phi_i , Id) = d_{C^0}(\phi^{-1}, \phi_i^{-1}) \to 0.$$

The above proof would fail without the assumption that $\phi^{-1}$ exists.  In fact, it is possible for a sequence of homeomorphisms to converge (with respect to the above version of $d_{C^0}$), to a map which is not a homeomorphism.  In that case, the sequence of inverses diverges.  
Some authors use a version of $d_{C^0}$ which avoids the above issue by making it impossible for a sequence of homeomorphisms to converge to a map which is not a homeomorphism.   For example, this is achieved in \cite{muller-oh} by defining  $$\bar{d}_{C^0}(\phi,\psi) = \max_{x} (d(\phi(x),\psi(x)) + d(\phi^{-1}(x),\psi^{-1}(x))).$$  As pointed out by M\"uller and Oh, the group of homeomorphisms equipped with $\bar{d}_{C^0}$ is a complete metric space.
\end{remark}

\begin{proof}First, note that Theorem \ref{theo unicite hameo} follows from the following simpler statement: If $H_k$ is a sequence of normalized smooth Hamiltonians which uniformly converges to some continuous function $H$ and if the flows $\phi_{H_k}^t$ converge uniformly to $\Id$, then $H=0$. 

Then remark that if we knew that the spectral distance is continuous with respect to the $C^0$--topology then this statement would follow directly from Theorem \ref{theo unicite complete} on rational symplectic manifolds. Unfortunately, this is only partially known and we need a trick to get around this difficulty.  We are going to show that for any connected and sufficiently small open subset $U\subset M$, the function $H$ only depends on the time variable $t$. Since $H$ is normalized, this will prove the statement. We use the same trick as in \cite[Theorem 11]{buhovsky-seyfaddini}.

Let $U$ be an open connected subset of $M$ small enough to admit a symplectic embedding to a closed rational symplectic manifold $\iota:U\hookrightarrow W$. Let $\psi$ be a Hamiltonian diffeomorphism generated by a Hamiltonian function compactly supported in $U$. Since $\phi_{H_k}^t$ $C^0$--converges to $\Id$, the isotopy $\phi_{H_k}^{-t}\psi^{-1}\phi_{H_k}^t\psi$ is supported in $U$ for $k$ large enough. Moreover, by Remark \ref{rem: C0_dist}, it converges to $\Id$ in the $C^0$ sense. We may pushforward this isotopy using the embedding $\iota$ and get a Hamiltonian isotopy of $W$ supported $\iota(U)$. This isotopy also converges to $\Id$ in the $C^0$ sense. Thus, according to \cite[Theorem 1]{seyfaddini11}, its spectral pseudo-norm converges to 0. In other words, $\gamma(\tilde{\psi}^{-1}\tilde{\phi}_{H_k}^t\tilde{\psi},\tilde{\phi}_{H_k}^t)$ converges to 0. We may now apply our Theorem \ref{theo unicite complete}  in $W$ and get that $H(t,\psi(x))-H(t,x)$ only depends on the time variable on $[0,1]\times U$. Since this holds for any $\psi$, this proves our claim that $H$ depends only on the time variable on $[0,1]\times U$.
\end{proof}

\section{$C^0$-rigidity of the Poisson bracket}

This section is devoted to the proof of Theorem \ref{theo Poisson}

\begin{proof} We use the notation of Theorem \ref{theo Poisson}.  The assumption $G\in C^0_{\Ham}$ means that there exists a sequence of smooth functions $G_k'$ (a priori different from $G_k$), which converges uniformly to $G$ and such that the flows $\phi_{G_k'}^t$ converge in the $C^0$ sense to a continuous isotopy also denoted $\phi_G^t$. Let $s$ be a real number. We want to prove that $F=F\circ\phi_G^s$. The sequence of functions $F_k'=F_k\circ\phi_{G_k'}^s$ converges uniformly to $F\circ\phi_G^s$. In view of Theorem \ref{theo unicite complete}, if we show that $\gamma(\tilde{\phi}^t_{F_k},\tilde{\phi}^t_{F_k'})$ converges to 0, then $F=F\circ\phi_G^s$ follows.

Let us recall two identities. For any smooth functions $H$, $K$,
\begin{equation}\label{equation 1} \phi^t_{H\circ\phi_K^s}=\phi^{-s}_{K}\circ\phi^t_{H}\circ\phi^{s}_{K},
\end{equation}
\begin{equation}\label{equation 2} H\circ\phi_K^s-H=\int_0^s\{H,K\}\circ\phi_K^\sigma\,d\sigma.
\end{equation}
The triangle inequality for $\gamma$ and (\ref{equation 1}) give
$$\gamma(\tilde{\phi}^t_{F_k},\tilde{\phi}^t_{F_k'})\leq \gamma(\tilde{\phi}^t_{F_k\circ\phi_{G_k}^s},\tilde{\phi}^t_{F_k})+\gamma(\tilde{\phi}^{-s}_{G_k}\tilde{\phi}^t_{F_k}\tilde{\phi}^{s}_{G_k},\tilde{\phi}^{-s}_{G_k'}\tilde{\phi}^t_{F_k}\tilde{\phi}^{s}_{G_k'}).$$
The Lipschitz properties of $\gamma$ with respect to the $C^0$--norm of Hamiltonians, the bi-invariance of $\gamma$ and (\ref{equation 2}) yield
$$\gamma(\tilde{\phi}^t_{F_k},\tilde{\phi}^t_{F_k'})\leq
 s\|\{F_k,G_k\}\|_{C^0}+2\|G_k-G_k'\|_{C^0}.$$
Hence $\gamma(\tilde{\phi}^t_{F_k},\tilde{\phi}^t_{F_k'})$ converges to 0 as wanted.
\end{proof}

\nocite{*}
\bibliographystyle{plain}
\bibliography{biblio}

\end{document}